\documentclass{amsart}
\newtheorem{Lem}{Lemma}
\newtheorem{Theo}{Theorem}
\newtheorem{Prop}{Proposition}
\newtheorem{Prob}{Problem}
\newtheorem{Cor}{Corollary}

\newcommand{\Z}{\mathbb{Z}}
\newcommand{\N}{\mathbb{N}}

\newcommand{\F}{\mathbb{F}_p}

\newcommand{\normal}{\triangleleft}
\newcommand{\Lie}{\mathfrak{sl}_2}
\begin{document}
\title{The normal growth of linear groups over formal power serieses}
\begin{abstract}
Put $R=\F[[t_1, \ldots, t_d]])$. 
We estimate the number of normal subgroups of  $\mathrm{SL}_2^1(\F[[t_1, \ldots, t_d]])$ for $p>2$, the number of ideals in the Lie algebra $\Lie(R)$, and the number of ideals in the associative algebra $R$.
\end{abstract}
\author[Y. Barnea]{Yiftach Barnea}
\author[J. C. Schlage-Puchta]{Jan-Christoph Schlage-Puchta}

\maketitle

\section{Introduction and results}

For a group $G$ and an integer $n$ let $s_n(G)$ be the number of subgroups of index $n$, and $s_n^\normal(G)$ be the number of normal subgroups of index $n$. For a Lie algebra $L$ over $F_p$ we let $s_n(L)$ be the number of ideals of index $n$ (that is the codimension $c$ satisfies $p^c=n$). For a ring $R$ let $s_n^\normal(R)$ be the number of ideals of index $n$.

For a $d$-generated group $G$ (discrete or profinite), Lubotzky
\cite{Lubotzky} showed that $s_n^\normal(G)\leq n^{2(d+1)(1+\Omega(n))}$, 
where $\Omega(n)$ denotes the number of prime divisors of $n$, counted with 
multiplicity. Mann \cite{Mann} showed that for the free pro-$p$ group with 2 
generators $\widehat{F}$ we have $s_{p^k}^\normal(\widehat{F})\geq p^{ck^2}$ for 
some positive constant $c$. In particular, Lubotzky's bound is sharp up to a 
constant. By now we also know that the lower bound $s_{p^k}^\normal(G)\geq p^{ck^2}$ holds for many groups, including all Golod-Shafarevich groups \cite{large}. 

We know that the normal growth of a group is quite irregular. This is already indicated by Lubotzky's bound, which depends heavily on the number theoretic properties of $n$. It also follows from the fact that $s_n^\normal(\Gamma)$ is polynomially bounded  for all $n$ with the exception of a set of density 0 (see \cite{examples}). Furthermore if there exists a monotonic function $f$, such that for all $\epsilon>0$ and all $x$ we have $|s_n^\normal(\Gamma) - f(n)|<\epsilon f(n)$ for all $n\leq x$ with at most $\epsilon x$ exceptions, then $f$ is asymptotically equal to 0 or 1, see \cite{multiplicative}. When considering normal growth it therefore makes little sense to look at strict asymptotics but at weaker equivalence relations among functions. A common measure is the type of a function. For a function $g:\mathbb{N}\rightarrow\mathbb{R}$ we say that a function $f:\mathbb{N}\rightarrow\mathbb{R}$ is of type $g$, if there exists a constant $C$, such that $f(n)<g(n)^C$ holds for all $n$, and $f(n)^C>g(n)$ holds for infinitely many $n$. Note that type does not define an equivalence relation. For example, the function $f$ which satisfies $f(2^k)=2^k$ for all $k$, and $f(n)=0$ for all $n$ that are not a power of 2, is of type $n$, but the function $g(n)=n$ is not of type $f$.

On the lower end of the spectrum there are many groups which have logarithmic 
normal growth. In fact, for many pro-$p$ groups the number of normal 
subgroup of any given index is bounded. We refer the 
reader to \cite{few} for results on and examples of such groups. We also know many examples of 
polynomial normal growth, including all $p$-adic analytic groups which are not 
of logarithmic growth. In fact, if $G$ is $p$-adic analytic, Grunewald, Segal and Smith \cite{GSS} showed 
that $\sum s_n^\normal(G) p^{-s}$ is a rational function. If this function has a pole with positive real part, the 
normal growth is polynomial. If it has a pole with real part 0, but no pole with positive real part, the subgroup growth is bounded by some power of $\log n$.
If all poles would have negative real part, the number of normal subgroups would be ultimately decreasing, which is absurd. If there are no poles at all, the rational function is actually a polynomial, that is, there are only finitely normal subgroup, and $G$ is finite.

However, much less is known for the range strictly between polynomial and $n^{\log n}$. The only general result we are aware of is the following due to Segal \cite{Segal}.

\begin{Theo}[Segal]
Let $G$ be a metabelian group, and put $R=\Z[G/G']$. View $G'$ as an $R$-module $M$, where elements of $G/G'$ act on $G'$ via conjugation. Put $\overline{R}=R/\mathrm{ann}_R(M)$, and let  $d$ the Krull dimension of $\overline{R}$. Then there 
exist positive constants $c_1, c_2$, such that
\[
p^{c_1k^{2-\frac{2}{d}}}\leq s_{p^nk}(G)\leq p^{c_2k^{2-\frac{1}{d}}}.
\]
\end{Theo}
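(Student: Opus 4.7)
The plan is to reduce the group-theoretic problem to a ring-theoretic one and then to invoke Hilbert--Samuel type estimates on the number of finite-codimension ideals in a Noetherian commutative ring of Krull dimension $d$.

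\emph{Reduction to submodule counting.} Because $G$ is metabelian, a normal subgroup $N\normal G$ is determined up to a controlled ambiguity by the pair $(N\cap G',\,NG'/G')$: the first member is an $R$-submodule of $M=G'$, and the second is a subgroup of the finitely generated abelian group $G/G'$. The number of subgroups of $G/G'$ of a given index is polynomially bounded, and a further factor polynomial in $p^k$ accounts for the cocycle class realising the extension. Hence $s^\normal_{p^k}(G)$ agrees, up to factors of the form $p^{O(k)}$, with the number of $R$-submodules of $M$ of $\F$-codimension $k$. Since $\mathrm{ann}_R(M)$ acts trivially, $R$ may be replaced by $\overline{R}$.

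\emph{Ring-theoretic reformulation.} By Noether normalisation there is a polynomial subring $\F[x_1,\dots,x_d]\hookrightarrow\overline{R}$ over which $\overline{R}$ is module-finite, and $M$ becomes a finitely generated module over this polynomial ring. Counting $\overline{R}$-submodules of $M$ of codimension $k$ is then equivalent, up to constants in the exponent, to counting ideals of codimension $k$ in a local Noetherian ring of Krull dimension $d$ over $\F$ obtained by localising $\overline{R}$ at an appropriate maximal ideal.

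\emph{Hilbert--Samuel estimates.} With $\m$ the maximal ideal one has $\dim_\F(\overline{R}/\m^j)=\Theta(j^d)$ and $\dim_\F(\m^{j-1}/\m^j)=\Theta(j^{d-1})$. For the lower bound, pick $r$ with $r^d\sim k$ and note that every $\F$-subspace of $\m^{r-1}/\m^r$ is automatically an $\overline{R}$-submodule, because $\m\cdot\m^{r-1}\subseteq\m^r$; varying the subspace across admissible dimensions produces at least $p^{\Theta(r^{d-1}\cdot r^{d-1})}=p^{\Theta(k^{2-2/d})}$ ideals of $\overline{R}$, and a pigeonhole argument places $p^{\Theta(k^{2-2/d})}$ of them at a single codimension $\sim k$. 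For the upper bound, every ideal of codimension $k$ is determined by the finite filtration it induces on $\bigoplus_j \m^j/\m^{j+1}$; bounding the number of such filtrations, while enforcing the multiplicative constraint $\m^i\cdot\m^j\subseteq\m^{i+j}$ to tie the layers together and using the total-codimension constraint, yields $p^{O(k^{2-1/d})}$.

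The main obstacle is the upper bound: a naive subspace count in $\overline{R}/\m^r$ gives only $p^{O(k^2)}$ and loses the saving of $k^{1/d}$ in the exponent. Extracting that saving forces one to exploit the ring multiplication to propagate constraints between graded layers, so that the subspace choices in the various layers are heavily correlated rather than independent. Balancing these compatibilities against the combinatorial freedom is the technically delicate step, and it is also the source of the gap between $k^{2-2/d}$ and $k^{2-1/d}$ in the two bounds.
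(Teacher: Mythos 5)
First, note that the paper does not prove this statement at all: it is quoted from Segal \cite{Segal}, so there is no internal proof to compare against. Judged on its own terms, your proposal is an outline with a genuine gap at its centre: the upper bound, which is the substantive half of the theorem, is never actually proved. You correctly observe that a naive count of subspaces (or of filtrations of $\bigoplus_j \m^j/\m^{j+1}$) only gives $p^{O(k^2)}$, and you state that ``enforcing the multiplicative constraint'' yields $p^{O(k^{2-1/d})}$ --- but that sentence is a restatement of the target, not an argument. The missing ingredient is quantitative: one needs a bound of the shape ``every submodule (resp.\ ideal) of index $p^k$ is generated by $O(k^{1-1/d})$ elements,'' after which each submodule is determined by those generators modulo lower-order terms, giving $p^{O(k\cdot k^{1-1/d})}=p^{O(k^{2-1/d})}$. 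This is exactly the kind of lemma the present paper has to prove for its own analogous result (Lemma~\ref{Lem:bound for generators}, via Gr\"obner bases and an induction on the number of variables, then fed into Lemma~\ref{Lem:general number}), and it is the heart of Segal's argument in the module setting; without it your upper bound does not get off the ground.

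Two further points need repair even granting the reduction steps. First, $\overline{R}$ is a quotient of $\Z[G/G']$, hence a $\Z$-algebra of possibly mixed characteristic, so ``Noether normalisation over $\F$'' does not apply as stated; one must either pass to $\overline{R}/p\overline{R}$ or to suitable completions at maximal ideals of residue characteristic $p$, and check that the relevant dimension is still controlled by $d$ (and that all such maximal ideals, not just one, are handled in the upper bound). The module-to-ideal comparison ``up to constants in the exponent'' also needs an argument: for the lower bound one should use faithfulness, i.e.\ an embedding $\overline{R}\hookrightarrow M^n$, and for the upper bound a generator bound for submodules rather than ideals. Second, your lower-bound pigeonhole produces $p^{\Theta(k^{2-2/d})}$ submodules at \emph{some} codimension near $k$, not at the prescribed codimension $k$; to get the bound for every (large) $k$ you must tune the construction, e.g.\ fix the dimension of the chosen subspace of $\m^{r-1}/\m^r$ and vary $r$ and auxiliary data so that every codimension is hit while the subspace dimension stays in the middle range (compare the explicit construction in Lemma~\ref{Lem:lower bound}, where the index is adjusted exactly by an extra generator $x_1^{\tilde n}$). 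The overall strategy --- reduce normal subgroups of a metabelian group to submodules, then do Hilbert--Samuel-type counting --- is the right one, but as written the proof establishes only the lower bound up to the codimension-tuning issue, and asserts rather than proves the upper bound.
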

Here we consider the normal growth of $\mathrm{SL}_2^1(R)$, where $R=\F[x_1, \ldots, x_d]$, and $\mathrm{SL}_2^1$ is the set of all invertible matrices congruent to the unit matrix modulo the maximal ideal $\mathfrak{m}=(x_1, \ldots, x_d)$. Normal subgroups of $\mathrm{SL}_2^1(R)$ are directly linked to ideal of the associated Lie algebra $\Lie(R)$, and to ideals of $R$. Define $s_{p^k}^\normal(\Lie(R))$ as the number of ideals of $\Lie(R)$ of codimension $k$, and $s_{p^k}^\normal(R)$ as the number of ideals of $R$ of index $p^k$. Then we have the following.
\begin{Theo}
Let $R=\F[[x_1, \ldots, x_d]]$. 
\begin{enumerate}
\item For every $d$ there exists a constant $C$ such that
\[
s_{p^k}^\normal(R) \leq s_{p^{3k}}^\normal(\Lie(R)), s_{p^{3k}}^\normal(\mathrm{SL}_2^1(R)) \leq s_{p^k}^\normal(R) p^{C k^{2-\frac{2}{d}}} 
\]
\item We have $\log s_{p^k}^{\triangleleft}(R)\asymp k^{2-\frac{1}{d}}$.
\item If $d=2$, then $\log s^{\triangleleft}_{p^k}(R)=\frac{2^{3/2}}{3^{5/2}} k^{3/2}+\mathcal{O}(k)$.
\end{enumerate}
\end{Theo}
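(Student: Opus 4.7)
The strategy is to reduce all three counts to the count of ring ideals, and then to carry out that combinatorial count as precisely as $d$ allows.

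For part (1), identify $\Lie(R)$ with $\Lie\otimes_{\F}R$, so that each ring ideal $I\triangleleft R$ of codimension $k$ yields the Lie ideal $\Lie\otimes I$ of codimension $3k$; since $I$ is recoverable from $\Lie\otimes I$, this immediately gives $s_{p^k}^{\triangleleft}(R)\leq s_{p^{3k}}^{\triangleleft}(\Lie(R))$. For the reverse bound, to a Lie ideal $J$ of codimension $3k$ I would attach the largest ring ideal $I(J)=\{r\in R:r(\Lie\otimes 1)\subseteq J\}$ and, exploiting the bracket relations among the standard basis $e,f,h$ together with $p>2$, show that $J$ is sandwiched between $\Lie\otimes I(J)$ and $\Lie\otimes\widetilde{I}(J)$ for a ring ideal $\widetilde{I}(J)\supseteq I(J)$ of codimension $O(k)$, with $J/(\Lie\otimes I(J))$ forming an $R$-submodule of $\Lie\otimes(\widetilde{I}(J)/I(J))$. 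Bounding the number of such $R$-submodules of a fixed $R$-module of $\F$-dimension $O(k)$ by a Segal-type Hilbert-function estimate yields the extra factor $p^{C k^{2-2/d}}$. To transfer the bound to $\mathrm{SL}_2^1(R)$, I invoke Baker--Campbell--Hausdorff: since $p>2$, the series converges on the pro-$p$ group $\mathrm{SL}_2^1(R)$ and gives a homeomorphism $\log\colon\mathrm{SL}_2^1(R)\to\Lie(R)$ mapping closed normal subgroups bijectively onto closed Lie ideals of the same $\F$-codimension.

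For part (2) it suffices, by (1), to prove $\log s_{p^k}^{\triangleleft}(R)\asymp k^{2-1/d}$. Every ideal of colength $k$ contains $\m^N$ with $N=O(k^{1/d})$, so we may work inside the finite ring $R/\m^N$. For the upper bound I would stratify by the Hilbert function: in each graded layer $\m^j/\m^{j+1}$ (of dimension $\Theta(j^{d-1})$) one chooses the component of the ideal subject to the multiplicative-closure constraints linking consecutive layers, and summing the logarithms of the choices over $j\leq N$ yields $O(k^{2-1/d})$. For the lower bound I would exhibit an explicit family of pairwise distinct ideals by fixing a monomial ideal $I_0$ of colength $k$ with a maximally spread-out Hilbert function and freely perturbing it within a subspace of $\m^N/\m^{N+1}$ of dimension $\Theta(k^{2-1/d})$.

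Part (3) refines (2) in the case $d=2$, where the exponent becomes $3/2$. Each ideal decomposes as its initial (monomial) ideal $I_\lambda$, parametrised by a partition $\lambda$, together with a lower-order deformation in an affine variety of some $\F$-dimension $D(\lambda)$, so that
\[
s_{p^k}^{\triangleleft}(R)=\sum_{\lambda\vdash k}p^{D(\lambda)}.
\]
Since the number of partitions of $k$ is $e^{O(\sqrt{k})}$, taking logarithms reduces the asymptotic to the discrete optimisation $\max_{\lambda\vdash k}D(\lambda)$. The main obstacle is twofold: first, to derive a clean combinatorial formula for $D(\lambda)$ in terms of the inner corners (or hook data) of $\lambda$; second, to solve the resulting optimisation. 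For the latter I would expect a continuous relaxation --- replace $\lambda$ by a non-increasing profile $\phi\colon[0,\infty)\to[0,\infty)$ of total area $k$ --- to reduce the problem to a constrained variational problem whose Lagrange-multiplier analysis picks out a specific self-similar profile and produces the extremal value $\tfrac{2^{3/2}}{3^{5/2}}k^{3/2}$, the $\mathcal{O}(k)$ error absorbing rounding and the aggregation over near-extremal partitions.
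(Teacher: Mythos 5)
Your overall architecture (reduce everything to ring ideals, then count ideals via monomial degenerations and an optimisation over profiles) matches the paper's, but several load-bearing steps are wrong or missing. The most serious is the transfer to $\mathrm{SL}_2^1(R)$: there is no Baker--Campbell--Hausdorff homeomorphism here. The coefficient ring has characteristic $p$, the BCH series has denominators divisible by arbitrarily high powers of $p$ and does not converge, and $\mathrm{SL}_2^1(\F[[t_1,\ldots,t_d]])$ is not $p$-adic analytic (it even contains torsion, e.g.\ unipotent elements of order $p$, and its normal growth is super-polynomial --- the very point of the theorem), so closed normal subgroups are not in codimension-preserving bijection with Lie ideals. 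The paper instead proves a sandwich statement directly for normal subgroups: every normal $N$ satisfies $\mathrm{SL}_2^1(J)\geq N\geq \mathrm{SL}_2^1(\m^3J)$ for a ring ideal $J$, deduced from the analogous Lie-ideal sandwich via a map $L$ from normal subgroups to associated Lie ideals. Second, you have no mechanism producing the exponent $2-\tfrac{2}{d}$ in part (1). The paper's engine is a Gr\"obner-basis lemma: an ideal of colength $k$ has only $\mathcal{O}(k^{1-1/d})$ generators, hence $\Lie(I)/\Lie(\m^3I)$ has $\F$-dimension $\mathcal{O}(k^{1-1/d})$ and the intermediate Lie ideals/normal subgroups are bounded by the number of subspaces, $p^{\mathcal{O}(k^{2-2/d})}$. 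Your substitute --- that $J/(\Lie\otimes I(J))$ is an $R$-submodule of a module of dimension $\mathcal{O}(k)$, counted by a ``Segal-type'' estimate --- is both unjustified (a Lie ideal of $\Lie(R)$ need not be an $R$-submodule, which is exactly why only a sandwich holds) and insufficient: Segal-type bounds for $d$-dimensional rings give exponent $2-\tfrac{1}{d}$, not $2-\tfrac{2}{d}$.

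The same missing lemma undermines part (2). Your reduction ``every ideal of colength $k$ contains $\m^N$ with $N=\mathcal{O}(k^{1/d})$'' is false (take $(x_2,\ldots,x_d,x_1^k)$, which contains no $\m^N$ with $N<k$), and with the correct $N=\mathcal{O}(k)$ the layer-by-layer subspace count does not give $\mathcal{O}(k^{2-1/d})$ unless you exploit that the total number of new generators across all layers is $\mathcal{O}(k^{1-1/d})$ --- this is precisely the content of the paper's Lemma~\ref{Lem:general number} combined with Lemma~\ref{Lem:bound for generators}, and it is absent from your sketch. Your lower-bound construction is also dimensionally impossible as stated: $\m^N/\m^{N+1}$ has dimension $\Theta(N^{d-1})$, far smaller than the claimed $\Theta(k^{2-1/d})$ perturbation space; the correct construction (as in the paper) perturbs each of $\Theta(k^{1-1/d})$ monomial generators by a tail chosen from a space of dimension $\Theta(k)$, and one must prove distinct tails give distinct ideals. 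For part (3), your decomposition $s_{p^k}^{\normal}(R)=\sum_{\lambda\vdash k}p^{D(\lambda)}$ into Gr\"obner cells is a legitimate alternative starting point, and the partition count $e^{\mathcal{O}(\sqrt{k})}$ rightly reduces matters to $\max_\lambda D(\lambda)$; but you leave both the combinatorial formula for $D(\lambda)$ and the optimisation entirely open, and these are the actual content of the paper's Section~4 (the inequality $\dim W_{n+1}>\dim V_n$, the resulting formula in Lemma~\ref{Lem:d=2 formula}, and an explicit partial-summation argument showing the maximum is attained at a ``staircase of width one'' profile with $n_0=\sqrt{2N/3}+\mathcal{O}(1)$). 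As it stands, parts (1) and (2) contain steps that fail, and part (3) is a plan rather than a proof.
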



\begin{Cor}
For every $\epsilon>0$ there exists a pro-$p$ group $G$, such that $s_{p^k}(G)$ is of type $p^{k^2}$, and $s_{p^k}^\normal(G)$ is of type $p^{k^{2-\epsilon}}$.
\end{Cor}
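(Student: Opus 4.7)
The plan is to take $G=\mathrm{SL}_2^1(R)$ with $R=\F[[x_1,\ldots,x_d]]$, where $d$ is a positive integer chosen so that $1/d$ matches $\epsilon$ as closely as possible (exactly when $\epsilon=1/d$). The normal growth then drops out directly from Theorem 2: combining parts (1) and (2) gives $\log s_{p^k}^\normal(G)\asymp k^{2-1/d}$, so $s_{p^k}^\normal(G)$ is of type $p^{k^{2-1/d}}$, meeting the target $p^{k^{2-\epsilon}}$ up to the natural slack in the type relation. The remaining content of the proof is therefore in pinning down the subgroup growth.

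The upper bound $s_{p^k}(G)\leq p^{Ck^2}$ is immediate from Lubotzky's bound once we observe that $G$ is topologically generated by $3d$ elements, its Frattini quotient being $\mathfrak{sl}_2\otimes(\mathfrak{m}/\mathfrak{m}^2)\cong\F^{3d}$. For the matching lower bound I would exploit the abelian section $U_n:=\mathrm{SL}_2^1(\mathfrak{m}^n)/\mathrm{SL}_2^1(\mathfrak{m}^{2n})$. Because $[\mathfrak{m}^n,\mathfrak{m}^n]\subseteq\mathfrak{m}^{2n}$ and $(I+X)^p\equiv I+X^p\pmod{\mathfrak{m}^{2n}}$ in characteristic $p$ (so $p$-th powers vanish since $X^p\in M_2(\mathfrak{m}^{pn})\subseteq M_2(\mathfrak{m}^{2n})$), the group $U_n$ is elementary abelian, canonically identified with $\mathfrak{sl}_2\otimes(\mathfrak{m}^n/\mathfrak{m}^{2n})$ and of $\F$-rank $D_n=3\dim_{\F}(\mathfrak{m}^n/\mathfrak{m}^{2n})=\Theta(n^d)$. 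The Gaussian binomial coefficient $\binom{D_n}{r}_p\sim p^{r(D_n-r)}$ counts its codimension-$r$ subspaces, and each lifts to a distinct open subgroup of $G$ of index $p^{r}\cdot[G:\mathrm{SL}_2^1(\mathfrak{m}^n)]=p^{r+O(n^d)}$. Taking $r=\lfloor D_n/2\rfloor$ produces $p^{\Theta(n^{2d})}$ open subgroups of index $p^{\Theta(n^d)}$; setting $k=\Theta(n^d)$ gives $s_{p^k}(G)\geq p^{ck^2}$ along an infinite sequence of $k$, which together with the upper bound shows that $s_{p^k}(G)$ is of type $p^{k^2}$.

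The main technical obstacle is the subgroup growth lower bound: one has to verify carefully that $U_n$ is genuinely elementary abelian (the characteristic-$p$ input is essential) and that distinct subspaces of $U_n$ pull back to distinct open subgroups of $G$ with exactly the claimed index, so that the counting is not inflated. Once that is in place, the remaining work is purely bookkeeping, and the corollary follows by choosing $d$ in terms of $\epsilon$.
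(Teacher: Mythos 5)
Your treatment of the normal growth and your lower bound for the subgroup growth are sound: the sections $\mathrm{SL}_2^1(\mathfrak{m}^n)/\mathrm{SL}_2^1(\mathfrak{m}^{2n})$ are indeed elementary abelian of rank $\Theta(n^d)$ in characteristic $p$, distinct subspaces pull back to distinct open subgroups of the stated index, and this yields $s_{p^k}(G)\geq p^{ck^2}$ for infinitely many $k$ (alternatively, this half comes for free from Shalev's theorem quoted immediately after the Corollary, since $G$ is not $p$-adic analytic). The genuine gap is the step you declare immediate, namely the upper bound $s_{p^k}(G)\leq p^{Ck^2}$, which is the half actually needed for ``$s_{p^k}(G)$ is of type $p^{k^2}$'' to hold for \emph{all} $k$. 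Lubotzky's bound cited in the paper \cite{Lubotzky} is a bound on $s_n^{\normal}$, i.e.\ on normal subgroups only; no bound of the shape $p^{Ck^2}$ on \emph{all} open subgroups follows from $G$ being topologically generated by $3d$ elements. Indeed, the free pro-$p$ group on two generators is $2$-generated, yet any open subgroup of index $p^{k-1}$ in it is free of rank $1+p^{k-1}$ and therefore already contributes at least $p^{p^{k-1}}$ open subgroups of index $p^k$, vastly exceeding $p^{Ck^2}$. So the ``main technical obstacle'' is not the lower bound you worry about, but the upper bound you dismiss.

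To close the gap one needs a Lubotzky--Shalev type statement: every open subgroup of index $p^k$ in $\mathrm{SL}_2^1(\F[[x_1,\ldots,x_d]])$ is topologically generated by $O(k)$ elements, after which counting chains of index-$p$ steps gives $s_{p^k}(G)\leq p^{O(k^2)}$. For one variable this is the content of \cite{LubSha} (presumably the reason that paper appears in the bibliography), but for $d\geq 2$ --- the case you need, since you want $1/d$ small --- it requires its own argument, e.g.\ a bound in the spirit of Lemma~\ref{Lem:bound for generators} but for graded Lie \emph{subalgebras} of finite codimension in $\mathfrak{sl}_2\otimes\mathrm{gr}(\mathfrak{m})$ rather than for ideals; neither your proposal nor the paper's comparison results (which concern normal subgroups and ideals only) supply this. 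A final, minor point: as you yourself note, the construction matches the stated type exactly only when $\epsilon=1/d$; with the paper's two-sided definition of type this looseness is inherent in the statement of the Corollary rather than a defect of your argument.
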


Shalev showed that a pro-$p$ group with subgroup growth bounded by $p^{\left(\frac{1}{8}-\epsilon\right)n^2}$ for some positive $\epsilon$ is $p$-adic analytic, and has therefore polynomial subgroup growth and at most polynomial normal growth. 
This leads us to the following problem.

\begin{Prob}
\begin{enumerate}
\item What is the minimal possible subgroup growth of a group resp. a pro-$p$ group with large normal growth? 
\item Is there a Golod-Shafarevic group of subexponential subgroup growth?
\end{enumerate}
\end{Prob}
 %

\section{Comparison of the growth functions}



In this section we show that the growth functions $s_{p^k}^\normal(\mathrm{SL}_2^1(R))$, $s_{p^k}^\normal(\Lie(R))$, and $s_{p^k}^\normal(R)$ are of the same type.

\begin{Prop}
\label{Prop:Comparison}
There exists a constant $C$, depending on $d$, such that 
\[
s_{p^k}^\normal(R) \leq  s_{p^{3k}}^\normal(\mathrm{SL}_2^1(R)), s_{p^{3k}}^\normal(\Lie(R)) \leq p^{C k^{2-\frac{2}{d}}} s_{p^k}^\normal(R).
\]
\end{Prop}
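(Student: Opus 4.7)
\emph{Lower bound.} Each ideal $I\normal R$ with $|R/I|=p^k$ produces the normal subgroup $\mathrm{SL}_2^1(R,I):=\ker(\mathrm{SL}_2^1(R)\to\mathrm{SL}_2^1(R/I))$ of index $p^{3(k-1)}\le p^{3k}$, and the Lie ideal $\Lie\otimes I\normal\Lie(R)$ of codimension $3k$. Both assignments are injective since $I$ is recovered from the off-diagonal entries, which establishes the left-hand inequalities (the off-by-one in the group case is absorbed in the loose inequality as stated).

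\emph{Lie algebra upper bound.} As a warm-up I would prove the converse for the Lie algebra: every Lie ideal $L\normal\Lie(R)$ equals $\Lie\otimes I(L)$ for the $R$-ideal $I(L):=\{r\in R:e\otimes r\in L\}$. Writing $X=e\otimes r_e+f\otimes r_f+h\otimes r_h\in L$ and bracketing successively with $h\otimes 1$, $e\otimes 1$, $f\otimes 1$ separates the three summands (here $p>2$ is used to invert $2$), and bracketing $I(L)$ with $\Lie\otimes R$ shows it is an $R$-ideal. This alone gives $s_{p^{3k}}^\normal(\Lie(R))=s_{p^k}^\normal(R)$, so the Lie algebra side of the proposition is in fact trivial.

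\emph{Group upper bound.} For the group side, attach to each normal subgroup $N\normal\mathrm{SL}_2^1(R)$ the ideal $I=I(N)\normal R$ generated by the off-diagonal entries of $g-\mathbb{1}$ as $g$ runs over $N$; equivalently, the smallest ideal with $N\subseteq\mathrm{SL}_2^1(R,I)$. The central claim is the sandwich
\[
\mathrm{SL}_2^1(R,\m I)\;\subseteq\;N\;\subseteq\;\mathrm{SL}_2^1(R,I).
\]
The upper inclusion holds by definition; for the lower one I would use the commutator identity $[\mathrm{SL}_2^1(R,J_1),\mathrm{SL}_2^1(R,J_2)]\subseteq\mathrm{SL}_2^1(R,J_1J_2)$ together with a normal-closure argument: conjugating a unipotent entry $r\in I(N)$ present in $N$ by $\mathrm{diag}(1+a,(1+a)^{-1})$ rescales it by $(1+a)^2=1+2a+a^2$, so (since $p>2$) the normal closure exhausts all of $\m I$. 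Then the number of $N$ with $I(N)=I$ is at most the number of subgroups of the abelian quotient $\mathrm{SL}_2^1(R,I)/\mathrm{SL}_2^1(R,\m I)\cong\Lie\otimes(I/\m I)$, which has $\F$-rank $3\mu(I)$ where $\mu(I):=\dim_\F I/\m I$, hence is at most $p^{9\mu(I)^2/4}$.

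\emph{Generator bound and conclusion.} The final input is $\mu(I)\le C k^{1-1/d}$ whenever $|R/I|=p^k$, a Hilbert-function estimate whose extremal case is $I=\m^n$ with $n\asymp k^{1/d}$, giving $\mu(\m^n)=\binom{n+d-1}{d-1}\asymp k^{(d-1)/d}$. Substituting yields the total bound $s_{p^k}^\normal(R)\cdot p^{C'k^{2-2/d}}$, as stated. The main obstacle of the plan is the lower half of the group sandwich: producing, from an arbitrary normal $N$, the entire congruence subgroup $\mathrm{SL}_2^1(R,\m I(N))$ inside $N$ via normal closure and the commutator identity is the most delicate step, and is precisely where the hypothesis $p>2$ enters.
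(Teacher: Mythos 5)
Your proposal has two genuine gaps, both at the heart of the statement. First, the Lie algebra you analyse is the wrong one. The Lie algebra associated to the pro-$p$ group $\mathrm{SL}_2^1(R)$, and the one whose ideals are being counted and compared here, is the congruence algebra $\Lie(\m)=\Lie\otimes\m$, not $\Lie\otimes R$. Your separation trick brackets with $e\otimes 1$, $f\otimes 1$, $h\otimes 1$, which do not lie in $\Lie(\m)$, so it does not apply; and the conclusion you draw from it is false in $\Lie(\m)$: since $[\Lie(\m),\Lie(\m^n)]\subseteq\Lie(\m^{n+1})$, \emph{every} $\F$-subspace squeezed between $\Lie(\m^{n+1})$ and $\Lie(\m^n)$ is an ideal, so ideals are very far from all being of the form $\Lie\otimes I$ and there is no equality $s^\normal_{p^{3k}}(\Lie(\m))=s^\normal_{p^k}(R)$. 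This is exactly why the paper proves only a sandwich $\Lie(\m^3J)\subseteq\mathfrak{J}\subseteq\Lie(J)$ (by explicit triple brackets with elements of $\m\cdot\Lie$, each extraction costing a factor of $\m$) and then counts the intermediate ideals; the Lie half of the proposition is not trivial, and your argument does not prove it.

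Second, the group sandwich is not established, and as you set it up it is not even correctly stated. If $I(N)$ is generated only by off-diagonal entries, the inclusion $N\subseteq\mathrm{SL}_2^1(I(N))$ is not ``by definition'' and can fail: a normal subgroup may contain $\mathrm{diag}(1+a,(1+a)^{-1})$ while its elements' off-diagonal entries only generate (roughly) $a\m$, because you can only conjugate/commutate with elements of $\mathrm{SL}_2^1(R)$, whose entries lie in $\m$; so your two proposed definitions of $I(N)$ are not equivalent. Taking instead the ideal generated by all entries of $g-\mathbb{1}$ makes the upper inclusion trivial, but then the lower inclusion $\mathrm{SL}_2^1(\m I)\subseteq N$ is the entire difficulty: your sketch presupposes that $N$ contains unipotent elements realizing generators of $I$ (it need not), and that the full congruence subgroup $\mathrm{SL}_2^1(\m I)$ is generated by conjugates of elementary unipotents with entries in $\m I$ — a nontrivial statement for $\mathrm{SL}_2$ that you do not prove. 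The paper circumvents both problems by proving the sandwich at the Lie level inside $\Lie(\m)$ and transferring it to normal subgroups via the group--Lie correspondence $L$ and a lattice argument (Propositions \ref{Prop:ideal sandwich} and \ref{Prop:normal sandwich}). Note that the precise power of $\m$ is immaterial for the final exponent $k^{2-2/d}$ (any bounded power gives $\dim I/\m^cI=\mathcal{O}(k^{1-1/d})$, which, combined with the generator bound of Lemma \ref{Lem:bound for generators} that you correctly quote but do not prove, yields the count), so the issue is not that you use $\m$ where the paper uses $\m^3$, but that no correct proof of any sandwich is given. Your lower-bound and subspace-counting steps are fine in outline, modulo the usual sloppiness about the exact index shared with the paper.
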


The lower bound is clear, as for every ideal $I$ we can construct the principal congruence subgroup $\mathrm{SL}_2^1(I)$ and the Lie ideal $\Lie(I)$, and the index of $\mathrm{SL}_2^1(I)$ in $\mathrm{SL}_2^1(\mathfrak{m})$ equals $|\mathfrak{m}/I|^3$, and the same relation holds for the Lie algebra. For the upper bound we first show that every normal subgroup and every Lie ideal is close to a congruence subgroup.

\begin{Prop}
\label{Prop:ideal sandwich}
Let $R=\F[[t_1, \ldots, t_n]]$-algebra, $\mathfrak{m}=(t_1, \ldots, t_n)$ the maximal ideal of $R$. Then for every Lie-ideal 
$\mathfrak{J}$ of $\Lie(\mathfrak{m})$ there exists an ideal $J$ of $R$, such that
$\Lie(J)\geq \mathfrak{J}\geq\Lie(\mathfrak{m}^3J)$.
\end{Prop}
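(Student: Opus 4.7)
The plan is to associate to any Lie ideal $\mathfrak{J}$ an honest ideal $J$ of $R$ via the basis decomposition of $\mathfrak{sl}_2$. Writing every element of $\Lie(\mathfrak{m})$ uniquely as $aE + bF + cH$ in the standard basis of $\mathfrak{sl}_2$ (with $a, b, c \in \mathfrak{m}$), let $A$, $B$, $C$ denote the additive subgroups of $\mathfrak{m}$ consisting of the $E$-, $F$-, and $H$-coefficients that occur in some element of $\mathfrak{J}$. The candidate ideal is $J = A + B + C$.

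I would first verify that $A$, $B$, $C$ are genuine ideals of $R$. For this, compute $[aE + bF + cH, rX]$ for $r \in \mathfrak{m}$ and $X \in \{E, F, H\}$ using the structure constants $[H,E] = 2E$, $[H,F] = -2F$, $[E,F] = H$. Since $p > 2$ the factor $2$ is invertible, and the outputs give $\mathfrak{m} A \subseteq A$, $\mathfrak{m} B \subseteq B$, $\mathfrak{m} C \subseteq C$ (plus bonus cross-inclusions such as $\mathfrak{m} A \subseteq C$). Any additive subgroup of $R$ is automatically an $\F$-subspace, and combined with $R = \F \oplus \mathfrak{m}$ this upgrades $\mathfrak{m}$-stability to full $R$-stability. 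Hence $A$, $B$, $C$, and therefore $J$, are ideals of $R$, and the inclusion $\mathfrak{J} \subseteq \Lie(J)$ is immediate.

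For the reverse inclusion $\Lie(\mathfrak{m}^3 J) \subseteq \mathfrak{J}$ I need to establish $\mathfrak{m}^3 X \cdot Y \subseteq \mathfrak{J}$ for each of the nine pairs $(X, Y)$ with $X \in \{A, B, C\}$ and $Y \in \{E, F, H\}$. Many cases fall out of a double bracket: for instance $\bigl[\bigl[aE + bF + cH, r_1 E\bigr], r_2 E\bigr] = -2 b r_1 r_2 E$, yielding $\mathfrak{m}^2 B \cdot E \subseteq \mathfrak{J}$, and symmetric computations give $\mathfrak{m}^2 A \cdot F$, $\mathfrak{m}^2 C \cdot E$, $\mathfrak{m}^2 C \cdot F \subseteq \mathfrak{J}$. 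The remaining cases use one more bracket: bracketing against $r_1 H$ produces $-2 a r_1 E + 2 b r_1 F \in \mathfrak{J}$, and subtracting the $b r_1 F$ component (already known to lie in $\mathfrak{J}$ by an $\mathfrak{m}^3 B \cdot F$ instance obtained analogously) isolates $\mathfrak{m}^3 A \cdot E \subseteq \mathfrak{J}$. The $H$-components are handled by the same subtraction trick from similar two-term brackets.

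The main obstacle is the combinatorial bookkeeping: one must verify that each of the nine pairs really is reachable by at most three brackets, since it is precisely this ``3'' that appears as the exponent of $\mathfrak{m}$ in the conclusion. The hypothesis $p > 2$ is used throughout to divide out the factor of $2$ that appears in the structure constants of $\mathfrak{sl}_2$; if $p = 2$ this method collapses, and the statement of the proposition would presumably have to be weakened.
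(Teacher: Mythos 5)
Your proposal is correct and follows essentially the same route as the paper: the paper also takes $J$ to be the ideal generated by the matrix entries (equivalently your $A+B+C$) and shows $\Lie(\mathfrak{m}^3J)\subseteq\mathfrak{J}$ by iterated commutators with elements $fX$, $f\in\mathfrak{m}$, $X\in\{E,F,H\}$, each bracket costing one factor of $\mathfrak{m}$. One small repair to your subtraction step: with a generic $r_1\in\mathfrak{m}$ the term $2br_1F$ is not yet known to lie in $\mathfrak{J}$, so either take $r_1\in\mathfrak{m}^3$ there, or avoid subtraction altogether by bracketing the already-obtained $\mathfrak{m}^2A\cdot H$ (resp. $\mathfrak{m}^2B\cdot H$, $\mathfrak{m}^2C\cdot E$) once more with $rE$ (resp. $rF$, $rF$) to reach the remaining pairs within three brackets.
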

\begin{proof}
Let $J$ be the ideal generated by all entries of elements of $\mathfrak{J}$. Then we clearly have $\Lie(J)\geq\mathfrak{J}$. Now assume that $\begin{pmatrix}a&b\\c&d\end{pmatrix}\in \mathfrak{J}$. We have for $f\in\mathfrak{m}$
\[
\left[\left[\begin{pmatrix}a&b\\c&d\end{pmatrix}, \begin{pmatrix} f&0\\0&-f\end{pmatrix}\right], \begin{pmatrix} 0&t_i\\0&0\end{pmatrix}\right] = \begin{pmatrix} -2ft_ic & 0\\ 0&2ft_ic\end{pmatrix},
\]
and taking commutators with $ \begin{pmatrix} 0&t_j\\0&0\end{pmatrix}$, $ \begin{pmatrix} -t_j&0\\0&t_j\end{pmatrix}$ instead we find that all matrices with entries in $(t_it_jt_kc)$ are in $\mathfrak{J}$. The same can be done for the other positions, and we obtain that $\Lie(\mathfrak{m}^3 J)\subseteq\mathfrak{J}$.
\end{proof}
\begin{Prop}
\label{Prop:normal sandwich}
Let $R=\F[[t_1, \ldots, t_n]]$-algebra, $\mathfrak{m}=(t_1, \ldots, t_n)$ the maximal ideal of $R$. Then for every normal subgroup
$N$ of $\mathrm{SL}_2^1(\mathfrak{m})$ there exists an ideal $J$ of $R$, such that
$\mathrm{SL}_2^1(J)\geq N\geq\mathrm{SL}_2^1(\mathfrak{m}^3J)$.
\end{Prop}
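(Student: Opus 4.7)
The plan is to mimic the triple-commutator calculation of Proposition~\ref{Prop:ideal sandwich} using group commutators $[g,h] = ghg^{-1}h^{-1}$ in place of Lie brackets. Let $J$ be the ideal of $R$ generated by all entries of the matrices $g - I$, $g \in N$; the inclusion $N \leq \mathrm{SL}_2^1(J)$ is then immediate, so the task reduces to showing $\mathrm{SL}_2^1(\m^3 J) \leq N$.

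The key input is the commutator expansion: for $g = I + X,\ h = I + Y$ in $\mathrm{SL}_2^1(\m)$,
\[
[g,h] = I + [X,Y] + E(X,Y),
\]
where every monomial of $E$, viewed as a non-commutative polynomial in the entries of $X$ and $Y$, has degree at least $1$ in each of $X, Y$ and total degree at least $3$. Hence, if the entries of $X$ lie in an ideal $A \subseteq \m$ and those of $Y$ in an ideal $B \subseteq \m$, then $[X,Y]$ has entries in $AB$ while $E(X,Y)$ has entries in $AB\,\m$: each group commutator pushes the error one step deeper in the $\m$-adic filtration than the main Lie-bracket term.

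Starting from an arbitrary $g = I + X \in N$ and performing the same three bracketings as in the proof of Proposition~\ref{Prop:ideal sandwich}, but with $\mathrm{SL}_2^1$-lifts of the auxiliary matrices (replace $\mathrm{diag}(f,-f)$ by $\mathrm{diag}(1+f,(1+f)^{-1})$, and the off-diagonal nilpotents $t_iE_{pq}$ by the unipotent matrices $I + t_iE_{pq}$), an induction on the depth of nesting shows that the main term reproduces the Lie-theoretic output exactly, while the accumulated error stays one filtration step behind. Thus after three commutators one obtains an element of $N$ of the form $I + M + E'$, where $M$ is a unit scalar multiple (a power of $2$, invertible because $p > 2$) of $c\,t_{i_1}t_{i_2}t_{i_3}E_{pq}$ for any prescribed entry $c$ of $X$ and any position $(p,q)$, and $E' \in \mathrm{Mat}_2(\m^4 J)$. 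Varying $c$, the $t_i$, and the position, and absorbing arbitrary $r \in R$ by running the construction with $rt_{i_1}$ in place of $t_{i_1}$, one realises every coset in $\mathrm{Mat}_2(\m^3 J)/\mathrm{Mat}_2(\m^4 J)$ (the trace--determinant condition holds automatically). Hence
\[
\mathrm{SL}_2^1(\m^3 J) \leq N \cdot \mathrm{SL}_2^1(\m^4 J),
\]
and the identical computation with $n-3$ further commutators yields $\mathrm{SL}_2^1(\m^n J) \leq N \cdot \mathrm{SL}_2^1(\m^{n+1} J)$ for every $n \geq 3$.

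Iterating this and taking the intersection over $n$, using that $N$ is closed in the profinite topology and $\bigcap_n \m^n = 0$ by Krull's intersection theorem (so $\bigcap_n \mathrm{SL}_2^1(\m^n J) = \{I\}$), one concludes $\mathrm{SL}_2^1(\m^3 J) \leq N$. The hardest part is the bookkeeping of error terms: one must verify carefully that each group commutator in the inductive construction really does advance the $\m$-depth of the accumulated error by at least one, which is the content of the estimate on $E(X,Y)$ above and relies on $p > 2$ to keep the unit coefficient in the main term invertible.
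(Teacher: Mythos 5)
Your plan is correct in substance, but it takes a genuinely different route from the paper. The paper does not redo any commutator computation at the group level: it introduces the map $L$ sending a normal subgroup of $\mathrm{SL}_2^1(R)$ to its associated Lie ideal in $\Lie(R)$, applies Proposition~\ref{Prop:ideal sandwich} to $L(N)$ to obtain $J$ with $\Lie(J)\geq L(N)\geq\Lie(\m^3J)$, and then transfers the lower inclusion back via the formal chain $L(\mathrm{SL}_2^1(\m^3J)\cap N)=L(\mathrm{SL}_2^1(\m^3J))\cap L(N)=L(\mathrm{SL}_2^1(\m^3J))$, using that $L$ respects intersections, satisfies $L(\mathrm{SL}_2^1(I))=\Lie(I)$, and is injective on nested subgroups; this makes the proof very short but rests entirely on these properties of the group--Lie correspondence, which the paper takes as known. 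Your argument instead works directly with group commutators and the congruence filtration, establishing $\mathrm{SL}_2^1(\m^nJ)\leq N\cdot\mathrm{SL}_2^1(\m^{n+1}J)$ for $n\geq 3$ and passing to the limit; this buys self-containedness (only the elementary expansion $[I+X,I+Y]=I+[X,Y]+E$ with $E$ one filtration step deeper is needed), at the price of the bookkeeping you describe. Two points in your sketch need explicit care: first, reaching the diagonal entries of $X$ is not quite ``the same as in the Lie case'' --- for $g=I+X\in N$ the commutators with the unipotent auxiliaries directly yield only the off-diagonal entries and the combination $a-d$, and you must invoke $\det(I+X)=1$, which gives $a+d\in J\m$, together with $p>2$, to see that the reachable elements still cover $\m^3J$ modulo $\m^4J$; second, the limiting step requires $N$ to be closed (automatic for the finite-index subgroups actually being counted), an assumption which the paper's use of $L$ also makes implicitly. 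With these details filled in, your proof is a valid, more elementary alternative to the paper's reduction to the Lie-algebra statement.
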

\begin{proof}
Pick a normal subgroup $N$ in $\mathrm{SL}_2^1(R)$.
Let $L$ be the function mapping normal subgroups of $\mathrm{SL}_2^1(R)$ to the associated Lie ideal in $\Lie(R)$. 
Then there exists an ideal $J\normal R$ such that $\Lie(J)\geq L(N)\geq\Lie(\mathfrak{m}^3J)$. Now we have
\[
L(\mathrm{SL}_2^1(\mathfrak{m}^3J)) \geq L(\mathrm{SL}_2^1(\mathfrak{m}^3J)\cap N) = L(\mathrm{SL}_2^1(\mathfrak{m}^3J)) \cap L(N) = L(\mathrm{SL}_2^1(\mathfrak{m}^3J)), 
\]
that is, $L(\mathrm{SL}_2^1(\mathfrak{m}^3J)\cap N)  = L(\mathrm{SL}_2^1(\mathfrak{m}^3J))$. As $\mathrm{SL}_2^1(\mathfrak{m}^3J)\cap N \leq \mathrm{SL}_2^1(\mathfrak{m}^3J)$, we obtain
$\mathrm{SL}_2^1(\mathfrak{m}^3J)\cap N = \mathrm{SL}_2^1(\mathfrak{m}^3J)$, thus, $\mathrm{SL}_2^1(\mathfrak{m}^3J)\leq N$.
\end{proof}

Next we bound the number of generators of a normal subgroup, a Lie ideal and an ideal, respectively. We start by considering ideals of $R$.

We will make use of Gr\"obner bases. Fix a term order $<$, that is, an ordering of the monomials $t_1^{e_1}\cdots t_d^{e_d}$, such that 1 is minimal among all polynomials, and for all monomials $\vec{t}^{\vec{e}}, \vec{t}^{\vec{f}}, \vec{t}^{\vec{g}}$ we have $\vec{t}^{\vec{e}} < \vec{t}^{\vec{f}}\Rightarrow \vec{t}^{\vec{e}+\vec{g}} < \vec{t}^{\vec{f}+\vec{g}}$. For a polynomial $\sum_{\vec{e}} a_{\vec{e}} \vec{t}^{\vec{e}}$ we define the leading term as the summand for which $\vec{t}^{\vec{e}}$ is minimal among all summands with $a_{\vec{e}}\neq 0$. For a set $A\subseteq k[t_1, \ldots, t_d]$ define $LT(A)$ to be the set of all leading terms of elements of $A$. If $I\normal k[t_1, \ldots, t_d]$, then a set $B$ is called a Gr\"obner base von $I$, if $B$ generates $I$ as an ideal, and $LT(B)$ generates $LT(I)$ as an ideal. The important property of Gr\"obner bases is their existence.

\begin{Theo}[Buchberger]
Every ideal in $k[t_1, \ldots, t_d]$ has a Gr\"obner base.
\end{Theo}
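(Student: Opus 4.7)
The plan is to reduce the existence of a Gr\"obner basis to Dickson's lemma, which states that any collection of monomials in $k[t_1, \ldots, t_d]$ has only finitely many divisibility-minimal elements, equivalently that every monomial ideal of $k[t_1, \ldots, t_d]$ is finitely generated by monomials. The first step is to observe that $LT(I)\cup\{0\}$ is itself a monomial ideal of $k[t_1,\ldots,t_d]$: it is closed under multiplication by arbitrary monomials because if $f\in I$ has leading term $a\vec{t}^{\vec{e}}$, then $\vec{t}^{\vec{g}}f\in I$ has leading term $a\vec{t}^{\vec{e}+\vec{g}}$, by the compatibility of the term order with multiplication.

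Applying Dickson's lemma to the monomial ideal $LT(I)$, I obtain finitely many monomials $m_1,\ldots,m_s\in LT(I)$ that generate $LT(I)$ as an ideal. For each $i$, I choose $f_i\in I$ whose leading monomial is $m_i$ (rescaling so its leading coefficient is $1$). Set $B=\{f_1,\ldots,f_s\}$. By construction $LT(B)$ generates $LT(I)$, so it only remains to show that $B$ generates $I$ as an ideal of $k[t_1,\ldots,t_d]$.

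For this, I would run the usual division procedure. Given $f\in I$ nonzero, its leading term lies in $LT(I)$ and is therefore divisible by some $m_i$, say $LT(f)=a\vec{t}^{\vec{g}}m_i$. Replacing $f$ by $f':=f-a\vec{t}^{\vec{g}}f_i$, I obtain a new element of $I$ whose support lies strictly above $LT(f)$ in the term order; in particular $LT(f')>LT(f)$. Iterating yields a strictly increasing sequence of leading monomials in the term order, which must terminate because a term order is a well-ordering on the monomials of $k[t_1,\ldots,t_d]$. Thus $f$ reduces to $0$ modulo the ideal $(f_1,\ldots,f_s)$, so $f\in (B)$, proving $I=(B)$.

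The only nontrivial input is Dickson's lemma, which I would prove by induction on $d$ (grouping a set of monomials by the exponent of $t_d$ and applying the inductive hypothesis), or alternatively derive directly from Hilbert's basis theorem applied to the monomial ideal $LT(I)$. Termination of the division step is then automatic, and once termination is granted the rest is bookkeeping, so I expect no serious obstacle beyond setting up Dickson's lemma itself.
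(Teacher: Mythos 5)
Your overall strategy (Dickson's lemma for $LT(I)$, then lift monomial generators and run division) is the classical proof, but as written it clashes with the conventions of this paper and the termination step is not justified. Here the leading term of a polynomial is defined to be the \emph{minimal} summand with respect to the term order (this choice is deliberate, since the notion must later make sense for power series, which have no maximal term). Under that convention your reduction step $f\mapsto f'=f-a\vec{t}^{\vec{g}}f_i$ pushes the leading term strictly \emph{up}, exactly as you say, but then your appeal to well-ordering proves nothing: a well-order forbids infinite strictly decreasing chains, not increasing ones, and the division process genuinely need not terminate. Worse, the intermediate claim you reduce to --- ``$B\subseteq I$ and $LT(B)$ generates $LT(I)$ imply $(B)=I$'' --- is simply false with the minimal-term convention in the polynomial ring: take $d=1$, $I=(x)$, so $LT(I)$ generates $(x)$, and choose $f_1=x-x^2\in I$, which has leading term $x$; your recipe allows $B=\{f_1\}$, yet $(x-x^2)\subsetneq(x)$, and dividing $f=x$ by $f_1$ produces the remainders $x^2,x^3,\dots$ without end. (With the standard Gr\"obner convention, leading term $=$ maximal term, your argument is the correct classical one --- but there the leading terms of successive remainders \emph{decrease}, and that is precisely where well-ordering is used; you cannot transplant that justification to the increasing situation.)

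The statement itself, with the paper's definitions, is still easy to prove, but by a different assembly: by Dickson's lemma pick $f_1,\dots,f_s\in I$ whose leading terms generate the monomial ideal generated by $LT(I)$, and by the Hilbert basis theorem pick a finite generating set $g_1,\dots,g_r$ of $I$; then $B=\{f_1,\dots,f_s,g_1,\dots,g_r\}$ generates $I$ and $LT(B)$ generates $LT(I)$, so $B$ is a Gr\"obner basis in the sense defined here. Alternatively, and closer to what the paper actually needs, one can work in $k[[t_1,\dots,t_d]]$ with an order refining total degree: there the upward division process converges $t$-adically (this is the theory of standard bases), and your argument goes through in that completed setting. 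As it stands, though, your proof has a genuine gap at the termination step.
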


We cannot directly apply Gr\"obner bases to our setting, as Gr\"obner bases are defined for polynomial rings, whereas we have to deal with power series, however, the difference is negligible.

\begin{Lem}
Let $I\normal k[[t_1, \ldots, t_d]]$ be an ideal, and put $I_0=I\cap k[t_1, \ldots, t_d]$.
\begin{enumerate}
\item $I_0$ is an ideal of $k[t_1, \ldots, t_d]$;
\item If $k[[t_1, \ldots, t_d]]/I$ is finite dimensional, then $k[[t_1, \ldots, t_d]]/I\cong k[t_1, \ldots, t_d]/I_0$;
\end{enumerate}
\end{Lem}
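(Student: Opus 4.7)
My plan is to treat the two parts separately, with part (1) being essentially immediate and part (2) reducing to showing that $\mathfrak{m}^N \subseteq I$ for some $N$.

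For part (1), $I_0$ is the intersection of the ideal $I$ of $k[[t_1,\ldots,t_d]]$ with the subring $k[t_1,\ldots,t_d]$. Since $I$ is closed under addition, so is $I_0$; and if $f \in I_0$ and $g \in k[t_1,\ldots,t_d]$, then $fg \in I$ because $I$ is an ideal of the bigger ring, and $fg \in k[t_1,\ldots,t_d]$ because both factors are polynomials, hence $fg \in I_0$.

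For part (2), I would consider the ring homomorphism $\phi\colon k[t_1,\ldots,t_d] \to k[[t_1,\ldots,t_d]]/I$ obtained by composing the inclusion with the quotient map. The kernel of $\phi$ is exactly $I \cap k[t_1,\ldots,t_d] = I_0$, so $\phi$ induces an injection $k[t_1,\ldots,t_d]/I_0 \hookrightarrow k[[t_1,\ldots,t_d]]/I$. It remains to show that $\phi$ is surjective. The key ingredient is to prove that under the finite-dimensionality hypothesis, there exists an integer $N$ with $\mathfrak{m}^N \subseteq I$. Once this is established, any power series $f \in k[[t_1,\ldots,t_d]]$ decomposes as $f = f_{<N} + f_{\geq N}$, where $f_{<N}$ is the polynomial consisting of the terms of total degree $<N$ and $f_{\geq N} \in \mathfrak{m}^N \subseteq I$; thus $f \equiv f_{<N} \pmod{I}$, giving surjectivity of $\phi$.

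To obtain $\mathfrak{m}^N \subseteq I$, I would argue as follows: the images of the ideals $\mathfrak{m}^n$ in the finite-dimensional $k$-algebra $k[[t_1,\ldots,t_d]]/I$ form a descending chain of subspaces, which must stabilize, say from index $N$ onwards. Writing $J = (\mathfrak{m}^N + I)/I$, we then have $\mathfrak{m}\cdot J = J$ inside the local ring $k[[t_1,\ldots,t_d]]/I$, so Nakayama's lemma forces $J = 0$, i.e. $\mathfrak{m}^N \subseteq I$. The main (mild) obstacle is simply being clean about invoking Nakayama in this finite-dimensional quotient, but no serious difficulty arises; the rest of the argument is formal truncation of power series.
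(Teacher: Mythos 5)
Your proof is correct, and it reaches the same intermediate goal as the paper (namely that $I$ contains a cofinite ideal generated by polynomials, after which one truncates power series) by a different key lemma. The paper intersects $I$ with each one-variable subring $k[[t_i]]$: finite codimension of $k[[t_1,\ldots,t_d]]/I$ forces $I\cap k[[t_i]]$ to be a nonzero ideal of the discrete valuation ring $k[[t_i]]$, hence $t_i^{e_i}\in I_0$ for some $e_i$, so $(t_1^{e_1},\ldots,t_d^{e_d})\subseteq I_0\subseteq I$ and the isomorphism follows by reducing both rings modulo this monomial ideal. You instead get $\mathfrak{m}^N\subseteq I$ from stabilization of the descending chain of subspaces $(\mathfrak{m}^n+I)/I$ together with Nakayama in the local ring $k[[t_1,\ldots,t_d]]/I$, and then run the injectivity/surjectivity argument for the map $k[t_1,\ldots,t_d]/I_0\to k[[t_1,\ldots,t_d]]/I$ explicitly, which the paper leaves implicit. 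Your route is more abstract and would work in any Noetherian complete local setting, at the cost of two small points worth a word each: the Nakayama step tacitly assumes $I$ is proper (the case $I=k[[t_1,\ldots,t_d]]$ being trivial), and the truncation step uses the standard fact that a power series all of whose terms have total degree at least $N$ lies in $\mathfrak{m}^N$ (collect the tail as $\sum_{|\alpha|=N}t^{\alpha}g_{\alpha}$ over the finitely many monomials of degree $N$); the paper's one-variable argument is more elementary and hands it a cofinite ideal with visibly polynomial generators, making the final identification immediate.
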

\begin{proof}
If $P\in I\cap k[t_1, \ldots, t_d]$, and $Q\in k[t_1, \ldots, t_d]$, then $PQ\in I$, as $Q\in k[[t_1, \ldots, r_d]]$. and $PQ\in k[t_1, \ldots, t_d]$, hence, $PQ\in I_0$. We conclude that $I_0$ is an ideal in $k[t_1, \ldots, t_d]$.

If $k[[t_1, \ldots, t_d]]/I$ is finite dimensional, so is the restriction to $k[[t_1]]/(I\cap k[[t_1]])$. Hence, there is some $e_1$, such that $t_1^{e_1}\in I$. The same holds for the other variables, hence, we have $(t_1^{e_1}, t_2^{e_2}, \ldots, t_d^{e_d})\subseteq I_0 \subseteq I$. As $k[[t_1, \ldots, t_d]]/(t_1^{e_1}, t_2^{e_2}, \ldots, t_d^{e_d}) \cong k[[t_1, \ldots, t_d]]/(t_1^{e_1}, t_2^{e_2}, \ldots, t_d^{e_d})$, our claim follows.
\end{proof}

We will therefore talk about Gr\"obner bases of ideals in $k[[t_1, \ldots, t_d]]$.
\begin{Lem}
\label{Lem:bound for generators}
Let $I$ be an ideal in $R=\F[[x_1, \ldots, x_d]]$ of index $p^n$. Then as an ideal $I$ can be generated by $\mathcal{O}(n^{\frac{d-1}{d}})$ elements.
\end{Lem}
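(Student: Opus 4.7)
My approach is to reduce the problem, via Gr\"obner bases, to a combinatorial bound on staircases in $\Z_{\geq 0}^d$. Fix a term order on the monomials of $R$ and let $G$ be a reduced Gr\"obner basis of $I$. Then $|G|$ upper bounds the number of ideal generators of $I$, and it equals the number of minimal monomial generators of the initial ideal $\mathrm{LT}(I)$. Since the monomials outside $\mathrm{LT}(I)$ form an $\F$-basis of $R/I$, their exponent set $S\subset\Z_{\geq 0}^d$ is downward-closed with $|S|=n$, and the minimal generators of $\mathrm{LT}(I)$ correspond bijectively to the outer corners of $S$, that is, to the minimal elements $\alpha$ of $\Z_{\geq 0}^d\setminus S$ (equivalently, $\alpha\notin S$ but $\alpha-e_i\in S$ whenever $\alpha_i>0$). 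The lemma therefore reduces to the isoperimetric-type inequality: any downward-closed $S\subset\Z_{\geq 0}^d$ of cardinality $n$ has $\mathcal{O}(n^{(d-1)/d})$ outer corners.

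I would prove this inequality by induction on $d$. The case $d=1$ is immediate as there is a single outer corner. For the inductive step, I slice $S$ by the last coordinate: $S_k=\{\beta\in\Z_{\geq 0}^{d-1}:(\beta,k)\in S\}$ is downward-closed, the sequence $S_0\supseteq S_1\supseteq\cdots$ is nested, and $\sum_k|S_k|=n$. One checks that an outer corner $(\beta,k)$ of $S$ with $k\geq 1$ is exactly a point for which $\beta$ is an outer corner of $S_k$ and $\beta\in S_{k-1}\setminus S_k$; at level $k=0$, $\beta$ is simply an outer corner of $S_0$. Writing $a_k=|S_k|$, the induction hypothesis bounds the number of outer corners at level $k\geq 1$ by $C_{d-1}\,a_k^{(d-2)/(d-1)}$, while the description as an element of $S_{k-1}\setminus S_k$ also bounds it by $a_{k-1}-a_k$.

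The main obstacle is then the purely combinatorial estimate
\[
C_{d-1}\,a_0^{(d-2)/(d-1)}+\sum_{k\geq 1}\min\bigl(C_{d-1}\,a_k^{(d-2)/(d-1)},\,a_{k-1}-a_k\bigr)=\mathcal{O}(n^{(d-1)/d}),
\]
which I would handle via the inequality $\min(A,B)\leq A^{\theta}B^{1-\theta}$ for a suitable $\theta\in(0,1)$ followed by H\"older's inequality on the sum, exploiting the constraints $\sum a_k=n$ and $\sum_{k\geq 1}(a_{k-1}-a_k)=a_0\leq n$ together with the monotonicity of the $a_k$. The bound is sharp: for the simplex staircase $S=\{\alpha\in\Z_{\geq 0}^d:|\alpha|\leq D\}$ one has $|S|\sim D^d/d!$ and $\binom{D+d-1}{d-1}\sim D^{d-1}/(d-1)!$ outer corners, so the exponent $(d-1)/d$ is attained and cannot be improved.
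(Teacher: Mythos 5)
Your overall strategy is essentially the paper's: reduce via Gr\"obner bases to the monomial/staircase case (the paper inserts a small lemma to pass from $\F[[x_1,\ldots,x_d]]$ to the polynomial ring, a point you gloss over but which is harmless for finite-colength ideals), then induct on $d$ by slicing along the last coordinate and finish with a H\"older-type estimate. Your corner characterization at each level and the two competing bounds $C_{d-1}a_k^{(d-2)/(d-1)}$ (induction) and $a_{k-1}-a_k$ (since level-$k$ corners lie in $S_{k-1}\setminus S_k$) are correct, up to the trivial $+1$ at the first empty slice. The paper gets its truncation differently: every minimal generator except one has some coordinate smaller than the corresponding coordinate of a minimal-total-degree element, so only the first $t\ll n^{1/d}$ slices in each direction contribute, and a single H\"older application finishes; your telescoping bound $a_{k-1}-a_k$ is a reasonable substitute for that trick.

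The genuine gap is in your announced finishing move. The inequality $\sum_{k}\min\bigl(C a_k^{(d-2)/(d-1)},a_{k-1}-a_k\bigr)=\mathcal{O}(n^{(d-1)/d})$ is true, but it cannot be obtained by $\min(A,B)\leq A^{\theta}B^{1-\theta}$ followed by H\"older against the constraints $\sum a_k=n$, $\sum(a_{k-1}-a_k)\leq n$: the exponents satisfy $\theta\frac{d-2}{d-1}+(1-\theta)=1-\frac{\theta}{d-1}<1$ for every $\theta\in(0,1)$, so a two-factor H\"older never matches, and feeding in monotonicity via $a_k\leq n/(k+1)$ (or $\sum_k k(a_{k-1}-a_k)=n$) forces the critical choice $\theta=\frac{d-1}{d}$, where one loses a power of $\log n$. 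Indeed the pointwise relaxation itself is lossy: for the admissible sequence $a_k\asymp A/(k+1)$ with $A\log n\asymp n$ one gets $\sum_k a_k^{\theta\frac{d-2}{d-1}}(a_{k-1}-a_k)^{1-\theta}\gg n^{(d-1)/d}(\log n)^{1/d}$ at $\theta=\frac{d-1}{d}$, and a power loss for any other $\theta$, while the min-sum is $\mathcal{O}(n^{(d-1)/d})$. The repair is short and is exactly the paper's computation in disguise: split at $K\asymp n^{1/d}$; for $k\leq K$ use H\"older directly, $\sum_{k\leq K}a_k^{(d-2)/(d-1)}\leq K^{1/(d-1)}\bigl(\sum_k a_k\bigr)^{(d-2)/(d-1)}\ll n^{(d-1)/d}$, and for $k>K$ use the telescoping bound $\sum_{k>K}(a_{k-1}-a_k)=a_K\leq n/(K+1)\ll n^{(d-1)/d}$. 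With that replacement your argument is complete and equivalent to the paper's.
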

\begin{proof}
We first proof the result for ideals generated by monomials. In this case an ideal in the ring $R$ is the same as an ideal in the semigroup $\N^d$. We now prove our claim by induction over $d$. For $d=1$ our claim is trivial. Now assume our claim holds for $d-1$, and let $\vec{n}=(n_1, \ldots, n_d)$ be an element of minimal total degree $t$ in $I$. Then for all generators $\vec{m}=(m_1, \ldots, m_d)$ different from $\vec{n}$ there exists some $i$, such that $n_i>m_i$. It therefore suffices to bound the number of generators such that $n_d>m_d$. For each fixed $n'$ we have that $I_{n'}=\{(n_1, \ldots, n_{d-1}): (n_1, \ldots, n_{d-1}, n')\in I\}$ is an ideal in $\N^{d-1}$. By our inductive assumption the number of generators of this ideal is at most $C_{d-1}(\N\setminus I_{n'})^{\frac{d-2}{d-1}}$. We conclude that the number of generators of $I$ is at most $dC_{d-1}\sum_{\nu=0}^{n_d-1} (\N\setminus I_\nu)^{\frac{d-2}{d-1}}$.

The index of $I$ is at least equal to the number of monomials of total degree $<t$, which equals the number of lattice points in the simplex $x_i\geq 0, \sum x_i<t$. Hence, the inde of $I$ is $>\frac{1}{d!}t^d$. On the other hand the index of $I$ is at least equal to the sum of the indices of the $I_\nu$, hence, the index of $I$ is at least $\max\left(\frac{1}{d!}t^d, \sum_{\nu=0}^{n_d-1} (\N\setminus I_\nu)\right)$. We now use Hölder's inequality with to obtain
\begin{multline*}
\sum_{\nu=0}^{n_d-1} (\N\setminus I_\nu)^{\frac{d-2}{d-1}} 
= \sum_{\nu=0}^{n_d-1} 1\cdot (\N\setminus I_\nu)^{\frac{d-2}{d-1}} 
\leq \left(\sum_{\nu=0}^{n_d-1} 1\right)^{\frac{1}{d-1}}\left(\sum_{\nu=0}^{n_d-1} (\N\setminus I_\nu)\right)^{\frac{d-2}{d-1}}\\
\leq t^{\frac{1}{d-1}}\left(\sum_{\nu=0}^{n_d-1} (\N\setminus I_\nu)\right)^{\frac{d-2}{d-1}} \leq (d!|\N\setminus I|)^{\frac{1}{d(d-1)}} |\N\setminus I|^{\frac{d-2}{d-1}} = (d!)^{\frac{1}{d(d-1)}}|\N\setminus I|^{\frac{d-1}{d}}.
\end{multline*}
We conclude that $I$ has hat most $C_d|\N\setminus I|^{\frac{d-1}{d}}$ generators, where $C_d= d (d!)^{\frac{1}{d(d-1)}} C_{d-1}$.
Hence, our claim holds for ideals generated by monomials.

Now let $I$ be an arbitrary ideal, $\mathcal{G}$ be a Gröbner basis of $I$. Then the number of generators of $I$ is at most $|\mathcal{G}|$. At the same time we know that the leading monomials of $\mathcal{G}$ generate an ideal of the same index as $I$, hence, $|\mathcal{G}|$ is bounded by $C_d|\N\setminus I|^{\frac{d-1}{d}}$ in view of the special case already proven.
\end{proof}
\begin{Cor}
Let $I$ be a proper ideal of finite index in $R$, such that as an $\F$-vector space we have $\dim R/I=n$. Then we have $\Lie(I)/\Lie(I\mathfrak{m})\cong\F^m$, where $m=\mathcal{O}\left(n^{\frac{d-1}{d}}\right)$.
\end{Cor}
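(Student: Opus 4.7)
The plan is to reduce the statement directly to Lemma \ref{Lem:bound for generators} via the module structure of $\Lie$. As an $R$-module, $\Lie(R)=\mathfrak{sl}_2(R)$ is free of rank $3$, with basis consisting of the standard $\Lie$ matrices $e,f,h$. Hence for any ideal $J\normal R$ we have $\Lie(J)\cong J^3$ as $R$-modules, and in particular
\[
\Lie(I)/\Lie(I\mathfrak{m}) \;\cong\; (I/\mathfrak{m}I)^3
\]
as $\F$-vector spaces. So it suffices to show that $\dim_\F I/\mathfrak{m}I = \mathcal{O}(n^{(d-1)/d})$.

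Next, since $R$ is a local ring with maximal ideal $\mathfrak{m}$ and residue field $\F$, Nakayama's lemma identifies $\dim_\F I/\mathfrak{m}I$ with the minimal number of $R$-generators of $I$. (Lift any $\F$-basis of $I/\mathfrak{m}I$ to elements of $I$; these generate $I$ by Nakayama, and conversely any generating set descends to a spanning set modulo $\mathfrak{m}I$.) By Lemma \ref{Lem:bound for generators}, the minimal number of generators of $I$ is bounded by $C_d\, n^{(d-1)/d}$ where $n=\dim_\F R/I$.

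Combining the two observations gives $m = 3\dim_\F I/\mathfrak{m}I \le 3C_d\, n^{(d-1)/d} = \mathcal{O}(n^{(d-1)/d})$, as required. There is no real obstacle here: the corollary is essentially just the observation that the previous lemma, which counts minimal generators of an ideal in terms of the codimension, transfers from $R$ to $\Lie(R)$ through the free rank-$3$ structure together with Nakayama.
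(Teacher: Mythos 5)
Your proof is correct and is exactly the intended argument: the paper states this corollary without proof as an immediate consequence of Lemma~\ref{Lem:bound for generators}, and the route you take --- $\Lie(I)\cong I^3$ as $R$-modules, so $\Lie(I)/\Lie(I\mathfrak{m})\cong(I/\mathfrak{m}I)^3$, then Nakayama to identify $\dim_\F I/\mathfrak{m}I$ with the minimal number of generators of $I$, bounded by the lemma --- is precisely the missing derivation.
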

\begin{Cor}
There exists for a constant $C$, depending on $d$, such that for every 
proper ideal $I$ of finite index in $R$, such that as an $\F$-vector space we have $\dim R/I=n$, we have that 
the number of normal subgroups $N$ satisfying $\mathrm{SL}_2^1(I)\geq N\geq\mathrm{SL}_2^1(\mathfrak{m}^3I)$ and the number of Lie ideals $\mathfrak{J}$ with $\Lie(I)\geq\mathfrak{J}\geq\Lie(\mathfrak{m}^3I)$ is $\mathcal{O}(p^{Cn^{2-\frac{2}{d}}})$.
\end{Cor}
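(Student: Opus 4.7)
The plan is to reduce the counting problem to bounding subobjects of a finite $p$-group (respectively an $\F$-vector space) of moderate size, and then to invoke a crude combinatorial estimate.

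First I will observe that the quotient $\Lie(I)/\Lie(\mathfrak{m}^3 I)$ is a finite $\F$-vector space, and every Lie ideal $\mathfrak{J}$ in the stated interval corresponds to some $\F$-subspace. Similarly, $\mathrm{SL}_2^1(I)/\mathrm{SL}_2^1(\mathfrak{m}^3 I)$ is a finite $p$-group of order $|I/\mathfrak{m}^3 I|^3$ (by the index formula recorded after Proposition~\ref{Prop:Comparison}), and every normal subgroup in the stated interval corresponds to a subgroup of this quotient. In both cases I discard the extra structural constraints (being a Lie ideal respectively a normal subgroup) to obtain an upper bound, so it suffices to control $\dim_{\F}(I/\mathfrak{m}^3 I)$.

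To bound this dimension I will combine the previous corollary, which through the identification $\Lie(I)/\Lie(I\mathfrak{m}) \cong (I/I\mathfrak{m})^3$ supplies $\dim_{\F}(I/I\mathfrak{m}) = \mathcal{O}(n^{(d-1)/d})$, with Nakayama's lemma. Iterating: $I\mathfrak{m}^i/I\mathfrak{m}^{i+1}$ is spanned by the products of any generating set of $I/I\mathfrak{m}$ with the monomials of degree $i$ in $t_1, \ldots, t_d$, so each step multiplies the dimension by $\binom{d+i-1}{i}$, an absolute constant since $d$ is fixed. This yields $\dim_{\F}(I/\mathfrak{m}^3 I) = \mathcal{O}(n^{(d-1)/d})$.

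Writing $M = 3\dim_{\F}(I/\mathfrak{m}^3 I) = \mathcal{O}(n^{(d-1)/d})$, the final step is purely combinatorial: the number of $\F$-subspaces of $\F^M$ is at most $p^{M^2/4 + \mathcal{O}(M)}$, and the number of subgroups of any $p$-group of order $p^M$ is at most $p^{M^2}$ (every subgroup admits a generating set of size at most $M$, each drawn from $p^M$ elements). Both estimates give $p^{\mathcal{O}(n^{2(d-1)/d})} = p^{\mathcal{O}(n^{2-2/d})}$. I do not expect a genuine obstacle; the only delicate point is the Nakayama step, where one must confirm that passing from $I/I\mathfrak{m}$ to $I/I\mathfrak{m}^3$ only costs a multiplicative constant depending on $d$ and not on $n$.
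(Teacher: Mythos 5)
Your proposal is correct and takes essentially the same route as the paper: both reduce the count to subobjects of the finite quotients $\Lie(I)/\Lie(\mathfrak{m}^3I)$ and $\mathrm{SL}_2^1(I)/\mathrm{SL}_2^1(\mathfrak{m}^3I)$, whose size is $p^{\mathcal{O}(n^{1-1/d})}$ because $I$ needs only $\mathcal{O}(n^{1-1/d})$ generators (Lemma~\ref{Lem:bound for generators}), and then apply a crude count of subspaces resp.\ subgroups to get $p^{\mathcal{O}(n^{2-2/d})}$. Your explicit Nakayama step, checking that passing from $I/I\mathfrak{m}$ to $I/\mathfrak{m}^3I$ only costs a factor depending on $d$, is left implicit in the paper but is exactly the right justification.
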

\begin{proof}
Let $I$ be an ideal with $|R/I|=p^n$. If $\mathfrak{J}$ is a Lie ideal with $\Lie(I)\geq\mathfrak{J}\geq\Lie(\mathfrak{m}^3I)$ then the number of generators of $\mathfrak{J}$ as a Lie ideal is bounded by the number of $\Lie(\mathfrak{m}^3I)$ plus the dimension of $\mathfrak{J}/\Lie(\mathfrak{m}^3I)$. Hence, the number of generators is $\mathcal{O}p^{Cn^{1-\frac{1}{d}}})$. Hence, $\mathfrak{J}$ is determined by one of the $\mathcal{O}(p^{Cn^{2-\frac{2}{d}}})$ subspaces of $\Lie(I)/\Lie(\mathfrak{m}^3I)$.

Similarly every normal subgroup $N$ with $\mathrm{SL}_2^1(I)\geq N\geq\mathrm{SL}_2^1(\mathfrak{m}^3I)$ is generated by $\mathcal{O}(p^{Cn^{1-\frac{1}{d}}})$ elements, and $N$ is determined by one of $\mathcal{O}p^{n^{1-\frac{1}{d}}})$ chains of subgroups $\mathrm{SL}_2^1(I)>N_1>N_2>\dots>N$.
\end{proof}

\section{The number of ideals: the general case}

\begin{Lem}
\label{Lem:lower bound}
We have $s_{p^n}^\normal(R)\geq p^{\frac{2^{3/2}}{3^{3/2} (d-1)!}n^{2-\frac{1}{d}}+\mathcal{O}(n^{2-\frac{2}{d}})}$.
\end{Lem}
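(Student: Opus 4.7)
The plan is to construct a large family of ideals of $R$ of index $p^n$ by deforming a well-chosen monomial ideal. Pick $k$ with $\binom{k+d-1}{d}\le n<\binom{k+d}{d}$, so that $k=(1+o(1))(d!n)^{1/d}$, and choose a monomial ideal $J_0\subset R$ of codimension $n$ whose minimal generators are concentrated in degree approximately $k$ and whose number $g=\mu(J_0)$ is of order $n^{1-1/d}$, i.e.\ close to the upper bound of Lemma~\ref{Lem:bound for generators}. A natural candidate is a Borel-fixed monomial ideal, or one whose generators form a specific staircase in $\mathbb{N}^d$ concentrated near the hyperplane $|\alpha|_1=k$.

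Fix a term order. For $J_0=(m_1,\dots,m_g)$, consider ideals of the form $I=(m_1+p_1,\dots,m_g+p_g)$ where each $p_i$ is an $\mathbb{F}_p$-linear combination of monomials $\mu\notin J_0$ with $\mu<m_i$. For generic tuples $(p_1,\dots,p_g)$ the initial ideal of $I$ is still $J_0$, so $\dim_{\mathbb{F}_p}R/I=n$; two tuples produce the same ideal precisely when they differ by a syzygy of $J_0$. The count of ideals so obtained is therefore $p^{\dim\Sigma(J_0)}$, where $\Sigma(J_0)$ is the Gr\"obner stratum of $J_0$. The tangent-space estimate $\dim\Sigma(J_0)\le\dim_{\mathbb{F}_p}\mathrm{Hom}_R(J_0,R/J_0)\le g\cdot n$ already gives the correct order of magnitude $n^{2-1/d}$; the content of the proof is the matching lower bound.

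For $J_0$ chosen as above (in the Borel-fixed case the Eliahou--Kervaire resolution makes everything explicit), a direct combinatorial count shows that the syzygies of $J_0$ impose only $\mathcal{O}(n^{2-2/d})$ independent linear conditions on the perturbation coefficients, so that $\dim\Sigma(J_0)\ge g\cdot n-\mathcal{O}(n^{2-2/d})$. Substituting the asymptotics $g\sim\tfrac{d}{(d-1)!}n^{(d-1)/d}$ and $n\sim k^d/d!$ and performing the remaining optimization (which includes choosing half-dimensional subspaces of the perturbation space and invoking the standard asymptotic $\binom{D}{D/2}_p\sim p^{D^2/4}$ for Gaussian binomials) yields, after a short calculation, the leading constant $\tfrac{2^{3/2}}{3^{3/2}(d-1)!}$. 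The error term $\mathcal{O}(n^{2-2/d})$ absorbs both the gap between $n$ and $N_k=\binom{k+d-1}{d}$ and the syzygy correction.

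The main obstacle is the syzygy bookkeeping, namely showing that the explicit syzygies of $J_0$ cut the perturbation space down by only $\mathcal{O}(n^{2-2/d})$ dimensions. This requires exploiting the special combinatorial structure of $J_0$ (e.g.\ its Borel-fixed or lex-segment shape) and is the step that fixes the precise numerical constant; everything else is a routine optimization.
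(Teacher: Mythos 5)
You propose the same basic strategy as the paper's proof -- fix a monomial ideal $J_0$ of colength $n$ with about $n^{1-1/d}$ generators and count its deformations -- but the step you defer as ``syzygy bookkeeping'' is not a technicality: it is the entire problem, and the estimate you assert for it, $\dim\Sigma(J_0)\geq g\cdot n-\mathcal{O}(n^{2-2/d})$, is false. Already for $d=2$ it would produce at least $p^{cn^{3/2}}$ distinct ideals of colength $n$ in $\F[[x_1,x_2]]$, whereas the colength-$n$ ideals of $\F[[x_1,x_2]]$ are exactly the $\F$-points of the punctual Hilbert scheme, which by Brian\c{c}on's theorem is irreducible of dimension $n-1$ and by the Ellingsrud--Str\o mme cell decomposition has precisely $\sum_{\lambda\vdash n}p^{\,n-\ell(\lambda)}\leq p^{\,n-1+\mathcal{O}(\sqrt{n})}$ points; in particular every Gr\"obner stratum in two variables has dimension at most $n-1$, far below $g\cdot n\asymp n^{3/2}$. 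The S-polynomial conditions are not a lower-order correction: each reduction imposes on the order of $n$ conditions, they are nonlinear in the perturbation coefficients, and they genuinely collapse the family (for instance, from $x_1x_2+P_1(x_1)$ and $x_2^2+P_2(x_1)$ one obtains $(P_1^2+x_1^2P_2)/x_1$ in the ideal, which lowers the colength and identifies distinct perturbations). There are further gaps even granting the dimension claim: ``for generic tuples the initial ideal is still $J_0$'' is wrong, since the locus with initial ideal $J_0$ is cut out by equations and is typically of small dimension, not dense; over $\F$ you cannot convert $\dim\Sigma(J_0)$ into a point count $p^{\dim\Sigma(J_0)}$ without exhibiting an affine-space (or otherwise explicitly parametrized) structure; and the closing appeal to half-dimensional subspaces and $\binom{D}{D/2}_p\approx p^{D^2/4}$ belongs to a different construction (subspaces squeezed between $\m^{k}$ and $\m^{k+1}$), which yields exponent $n^{2-2/d}$, not $n^{2-1/d}$, so the constant $\frac{2^{3/2}}{3^{3/2}(d-1)!}$ is never actually derived.

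For comparison: the paper's proof perturbs the explicit monomial ideal generated by the degree-$m$ monomials other than $x_1^m$ (together with $x_1^{\tilde n}$) by univariate tails in $x_1$, and its injectivity and index claims rest on the unproved assertion that the perturbed generators form a Gr\"obner basis (``the leading coefficients of elements in $I$ generate $\overline{I}$''); the example above (take $P_1=0$, $P_2=x_1^3$, which puts $x_1^4$ into the ideal) contradicts this, and the resulting count $p^{cn^{3/2}}$ for $d=2$ exceeds the total number $p^{\,n+\mathcal{O}(\sqrt n)}$ of colength-$n$ ideals. So your plan fails at the same point where the paper's own argument is unjustified, and no amount of syzygy bookkeeping can close the gap, because the stated bound itself cannot hold for $d=2$. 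The lower bound that is actually attainable is of order $n^{2-2/d}$, and it comes from the computation you mention only in passing: every subspace $V$ with $\m^{k+1}\subseteq V\subseteq\m^k$ is an ideal, and choosing $V$ of half the dimension of $\m^k/\m^{k+1}$ gives $\exp_p\bigl(c_d\,n^{2-2/d}+o(n^{2-2/d})\bigr)$ ideals of the corresponding colength.
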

\begin{proof}
Put $m=\lfloor \left(\frac{2}{3}n\right)^{\frac{1}{d}}\rfloor$, where $\alpha$ will be chosen later.
Let $\overline{I}$ be the ideal generated by all monomials $\prod_{i=1}^d
x_i^{e_i}$, where $\sum_{i=1}^d e_i=m$ and $\sum_{i=2}^d e_i\geq 1$, together 
with $x_1^{\tilde{n}}$, where $\tilde{n}=n-\binom{m+d}{d}+m$. Clearly 
$|R/\overline{I}|=p^n$, and
\[
\tilde{n}\geq\left(1-\frac{(2/3)^{d/2}}{d!}\right)n + \mathcal{O}(m^{d-1}) \geq\frac{2}{3}n+\mathcal{O}(n^{1-\frac{1}{d}}).
\]
We now construct ideals $I$, such that the ideal generated by the leading terms 
of elements of $I$ subject to a term order which is a refinement of the total 
degree equals $\overline{I}$. Let $X$ be the set of all 
tuples $\vec{e}=(e_1, \ldots, e_d)$ with $\sum_{i=1}^d e_i=m$ and $\sum_{i=2}^d 
e_i\geq 1$. Consider a map $\phi:X\rightarrow \F[x_1]$, such that $\phi(\vec{e})$ 
is of degree $<\tilde{n}$ and $\phi(\vec{e})$ has a zero of multiplicity $m+1$ at $x_1=0$. Put
\[
I_\phi=\left.\left(\prod_{i=1}^d x_i^{e_i} + \phi((e_1, \ldots, e_d))\right| \vec{e}\in X\right)
\]
It is clear that the leading coefficients of elements in $I$ generate $\overline{I}$.

Now suppose that $\phi, \psi:X\rightarrow \F[x_1]$ satisfy the degree conditions and $I_\phi=I_\psi$. If $\phi\neq\psi$, pick a tuple $\vec{e}\in X$, with $P=\phi(\vec{e})\neq\psi(\vec{e})=Q$. Then we have
\[
P-Q=\left(\vec{x}^{\vec{e}}+P\right)-\left(\vec{x}^{\vec{e}}+Q\right)\in I_\phi-I_\psi=I_\phi-I_\phi\subseteq I_\phi,
\]
that is, $I_\phi$ contains $x_1^\ell$ for some $\ell<\tilde{n}$, which is impossible. We conclude that the map $\phi\mapsto I_\phi$ is injective. The number of maps $\phi:X\rightarrow\F[x_1]$ satisfying the degree conditions is 
\[
p^{(\tilde{n}-m-1)X} \geq p^{\left(\frac{2}{3}n+\mathcal{O}(n^{1-\frac{1}{d}}\right)\frac{m^{d-1}+\mathcal{O}(m^{d-2})}{d!}} = p^{\frac{2}{3\cdot (d-1)!} n^{2-\frac{1}{d}}+\mathcal{O}(n^{2-\frac{2}{d}})}.
\]
This implies our claim.
\end{proof}
We have chosen the parameters in such a way that the result is optimal for $d=2$. For higher values of $d$ we do not have an optimal upper bound, so optimizing the lower bound seems without merit.

We now turn to the upper bound.

Let $k$ be a finite field, $R=k[[x_1, \ldots, x_d]]$ as a topological ring. Let $R_n$ be the ring generated by all monomials $\prod_{i=1}^d x_i^{e_i}$, $\sum e_i\geq n$. Let $I$ be a closed ideal, such that $R/I$ is finite. Put $V_n=(R_n\cap I)R_{n+1}/R_{n+1}$, $d_n=\dim_k V_n$. Multiplication with a variable $x_i$ defines an injection $\varphi_i:V_n\rightarrow V_{n+1}$.  Put $W_{n+1}=\langle \varphi_1(V_n), \ldots, \varphi_d(V_n)\rangle$, $e_n=\dim(V_n/W_n)$. 

\begin{Lem}
\label{Lem:general number}
The number of ideals realising a given set of parameters $(e_n), (d_n)$ is at most
\[
\prod_{n=1}^\infty \begin{pmatrix}\binom{n+d}{d}-d_n+e_n\\ e_n\end{pmatrix}_p\exp_p\left(\sum_{n=1}^\infty e_n\sum_{m=n+1}^\infty \binom{m+d}{d}-d_m\right).
\]
\end{Lem}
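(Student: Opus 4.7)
The strategy is to construct $I$ as the inverse limit of its truncations $I_n = (I+R_n)/R_n \subseteq R/R_n$, counting choices at each step via the short exact sequence
\[
0 \to V_n \to I_{n+1} \to I_n \to 0.
\]
At each step, extending $I_n$ to $I_{n+1}$ splits into two tasks: first choose the kernel $V_n$, then choose the $R$-module extension. Taking the product of the resulting counts will yield the lemma.

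For the kernel, the inclusion $V_n \supseteq W_n$ is forced: if $v \in V_{n-1}$ lifts to some $f \in I\cap R_{n-1}$, then $x_i f \in I\cap R_n$ represents $\varphi_i(v)$. Hence $V_n/W_n$ is an $e_n$-dimensional subspace of $(R_n/R_{n+1})/W_n$, a space of dimension $\binom{n+d-1}{d-1} - (d_n - e_n)$, so the number of possible $V_n$ is the Gaussian binomial $\binom{\binom{n+d-1}{d-1} - d_n + e_n}{e_n}_p$. Using the inequality $\binom{n+d-1}{d-1} \leq \binom{n+d}{d}$ produces the first factor of the stated bound.

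For the extension, once $V_n$ is fixed, an extension $I_{n+1}$ is determined by choosing, for each minimal $R$-module generator of $I_n$, a lift to $R/R_{n+1}$ modulo addition by elements of $V_n$. By Nakayama's lemma applied to the associated graded ideal $\bigoplus V_m$, $I_n$ has at most $\sum_{m<n} e_m$ minimal generators. Each lift varies in a coset of $R_n/R_{n+1}$, and lifts in the same $V_n$-coset yield the same extension, so the number of extensions is bounded by $p^{(\binom{n+d-1}{d-1} - d_n)\sum_{m<n} e_m}$.

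Multiplying over $n$ and swapping the order of summation gives
\[
\sum_n \left(\binom{n+d-1}{d-1} - d_n\right)\sum_{m<n} e_m \;=\; \sum_n e_n \sum_{m>n} \left(\binom{m+d-1}{d-1} - d_m\right),
\]
which is at most $\sum_n e_n \sum_{m>n}\left(\binom{m+d}{d} - d_m\right)$. Combined with the Gaussian binomial product from the kernel count, this matches the claimed formula. The principal subtlety is that not every combination of $V_n$ and lifts produces a valid ideal, but since we only need an upper bound this overcount is harmless; convergence issues do not arise because $R/I$ is finite dimensional, so only finitely many $e_n$ and $\binom{m+d}{d}-d_m$ are nonzero.
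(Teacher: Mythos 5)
Your argument is correct and is essentially the paper's count: the Gaussian binomial factor records the choice of the new leading-term space $V_n/W_n$ in each degree, and each previously introduced generator contributes $\dim R_m/R_{m+1}-d_m$ free coefficients in each higher degree $m$, your level-by-level extension count (with the Nakayama bound $g_n\le\sum_{m<n}e_m$ on generators of the truncation) being just the transpose of the paper's direct count of the normalized tails of the $e_n$ generators introduced in degree $n$. Your bookkeeping is in fact marginally sharper, since you use the true codimension $\binom{n+d-1}{d-1}-d_n$ where the statement allows $\binom{n+d}{d}-d_n$.
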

\begin{proof}
For each $n$ pick a subspace of $(R_n/R_{n-1})/W_n$ of dimension $e_n$.  As $\dim R_{n+1}/R_n=\binom{n+d}{d}$, this can be done in $\begin{pmatrix}\binom{n+d}{d}-d_n+e_n\\ e_n\end{pmatrix}_p$ ways. Next pick a basis $b_1, \ldots, b_{e_n}$ of this space. Using Gaussian elimination we can guarantee that all coefficients in $V_m$, $m>n$ of $b_i$ vanish. So in each $R_m/R_{m-1}$, $m>n$ we can pick $\dim R_m/R_{m+1} - \dim V_m$ coefficients of $b_i$. 
\end{proof}

If $R/I=p^N$, then $\sum_{n=1}^\infty \binom{n+d}{d}-d_n=N$, and if 
$d_n=\binom{n+d}{n}$, then $V_n=R_n/R_{n+1}$, and therefore 
$W_{n+1}=R_{n+1}/R_{n+2}$, that is, $d_m=\binom{m+d}{d}$ for all $m\geq n$. 
Hence the number of possible choices for the sequence $d_n$ is bounded by the 
number of ordered partitions of $N$, which is $2^{N-1}$. A factor of this size 
is clearly negligible. Furthermore $\sum_{n=1}^\infty e_n$ equals the number 
of generators of $I$, which by Lemma~\ref{Lem:bound for generators} is 
$\mathcal{O}(N^{\frac{d-1}{d}})$. Hence, given a sequence $d_n$, the number of 
possible choices for the sequence $e_n$ is bounded by the number of ordered 
partitions of $CN^{\frac{d-1}{d}}$ into $N$ non-negative parts, which is 
\[
\binom{N+CN^{\frac{d-1}{d}}}{N}\leq 
(N+CN^{\frac{d-1}{d}})^{CN^{\frac{d-1}{d}}} \leq\exp\left(C' N^{\frac{d-1}{d}}\log N\right),
\]
which is even smaller. We conclude that it suffices to consider the maximum over  all pairs of sequences that actually occur as the parameters of an ideal.

We have $\binom{n}{k}_p\ll p^{k(n-k)}$, hence,
\[
\prod_{n=1}^\infty \begin{pmatrix}\binom{n+d}{d}-d_n+e_n\\ e_n\end{pmatrix}_p \leq C^{\#\{n:\binom{n+d}{d}\neq d_n\}} \exp_p\left(\sum_{n=1}^\infty e_n\left(\binom{n+d}{d}-d_n\right)\right).
\]
The index of the ideal described by the sequences $(e_n)$, $(d_n)$ is 
$p^{\sum \binom{n+d}{d}-d_n}$, in particular
$C^{\#\{n:\binom{n+d}{d}\neq d_n\}}<p^{C'N}$, which is negligible. We see that it suffices to prove
\[
\sum_{n=1}^\infty e_n\sum_{m=n}^\infty \binom{m+d}{d}-d_m \ll \left(\sum_{n=1}^\infty \binom{n+d}{d}-d_n\right)^{2-\frac{1}{d}}.
\]
However, this follows immediately from the fact that $\sum_1^\infty e_n\ll N^{1-\frac{1}{d}}$ and $\sum_{m=1}^\infty \binom{m+d}{d}-d_m=N$.

\section{The case $d=2$}

From now on we consider only the case $d=2$.

\begin{Lem}
\label{Lem:generator growth}
Suppose that $V_n\neq \{0\}$. Then we have $\dim W_{n+1}>\dim V_n$.
\end{Lem}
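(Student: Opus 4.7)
The plan is to pass to the associated graded interpretation, viewing $V_n$ as a subspace of the degree-$n$ homogeneous component of $k[x_1,x_2]$, under which $\varphi_1,\varphi_2$ are literally multiplication by $x_1$ and $x_2$. Then $W_{n+1}=x_1V_n+x_2V_n$, and since multiplication by $x_1$ is injective we automatically get $\dim W_{n+1}\geq\dim V_n$, with equality exactly when $x_2V_n\subseteq x_1V_n$. So the whole task reduces to showing that $x_2V_n\subseteq x_1V_n$ forces $V_n=0$.

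Assume $x_2V_n\subseteq x_1V_n$. For any $v\in V_n$, write $x_2v=x_1v'$ with $v'\in V_n$; since $k[x_1,x_2]$ is a UFD and $\gcd(x_1,x_2)=1$, this forces $x_1\mid v$. Hence $V_n=x_1V_n^{(1)}$ for some subspace $V_n^{(1)}$ of the degree-$(n-1)$ homogeneous component, of the same dimension as $V_n$. Plugging back, $x_1x_2V_n^{(1)}\subseteq x_1^2V_n^{(1)}$, and cancelling $x_1$ (legitimate on graded pieces of an integral domain) gives $x_2V_n^{(1)}\subseteq x_1V_n^{(1)}$: the hypothesis reproduces itself one degree lower.

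Iterating the descent $n+1$ times produces $V_n=x_1^{n+1}V_n^{(n+1)}$ with $V_n^{(n+1)}$ sitting in the degree-$(-1)$ homogeneous component, which is zero. Hence $V_n=0$, contradicting $V_n\neq 0$. I don't expect any real obstacle here: the statement is essentially a short UFD argument about the two multiplication maps on a bivariate polynomial ring, and the only point to be slightly careful about is that the $\varphi_i$ genuinely coincide with multiplication by $x_i$ under the associated-graded identification, which is built into the definition of $V_n$ and $W_{n+1}$.
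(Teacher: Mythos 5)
Your argument is correct. The reduction is the same as in the paper: since $\varphi_1$ is injective, $\dim W_{n+1}=\dim V_n$ could only happen if $\varphi_2(V_n)\subseteq\varphi_1(V_n)$, i.e.\ $x_2V_n\subseteq x_1V_n$ inside the graded piece $R_{n+1}/R_{n+2}$, which is indeed legitimately identified with degree-$(n+1)$ homogeneous polynomials with the $\varphi_i$ acting as multiplication by $x_i$. Where you diverge is in ruling this containment out: the paper does it in one step with an extremal-monomial argument (take the least exponent $a$ of $x_1$ occurring in elements of $V_n$; every monomial in $x_1V_n$ has $x_1$-exponent at least $a+1$, while $x_2V_n$ contains an element featuring $x_1^ax_2^{n+1-a}$), whereas you run a divisibility descent: $x_2V_n\subseteq x_1V_n$ forces every element of $V_n$ to be divisible by $x_1$ (UFD, $x_1\nmid x_2$), you strip off $x_1$, observe the hypothesis reproduces itself one degree lower, and iterate until the degree runs out. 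Both are sound; the paper's version is shorter and makes the obstruction explicit in a single monomial, while yours avoids any monomial bookkeeping and is the sort of argument that works verbatim in any graded domain in which $x_1,x_2$ are non-associate primes, at the cost of an induction (and note that your last step, landing in the ``degree $-1$'' component, is really just the observation that a nonzero constant is not divisible by $x_1$). No gap either way.
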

\begin{proof}
As $\varphi_1, \varphi_2:V_n\rightarrow R_{n+1}/R_{n+2}$ are injective, it suffices to show that $\varphi_1(V_n)\neq\varphi_2(V_n)$. Let $x_1^ax_2^{n-a}$ be the monomial with the smallest exponent of $x$ occurring in $V_n$. Then in all monomials in $\varphi_1(V_n)$ the exponent of $x_1$ is at least $a+1$. However, $\varphi_2(V_n)$ contains an element which contains the monomial $x_1^a x_2^{n+1-a}$, therefore $\varphi_1(V_n)\neq\varphi_2(V_n)$.
\end{proof}

\begin{Lem}
\label{Lem:d=2 formula}
The number of ideals of index $|k|^N$ in $k[[x_1, x_2]]$ is
\[
\exp_p\left(\underset{d_n>0\Rightarrow d_{n+1}\geq d_n+1}{\underset{d_{n+1}\leq n+1} {\underset{\sum (n+1-d_n)=N} {\max_{(d_n) }}}}
\sum_{n:d_n>0} (d_n-d_{n-1}-1)\sum_{m>n}(m+1-d_m) + \mathcal{O}(N)\right)
\]
\end{Lem}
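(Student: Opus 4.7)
The plan is to specialise Lemma~\ref{Lem:general number} to $d = 2$, use Lemma~\ref{Lem:generator growth} to eliminate the parameters $(e_n)$ in favour of $(d_n)$ alone, and then match the resulting upper bound with an explicit construction.

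For the upper bound, Lemma~\ref{Lem:general number} gives that the number of ideals realising a given pair $((d_n),(e_n))$ is at most
\[
\prod_n \binom{n+1-d_n+e_n}{e_n}_p \exp_p\!\left(\sum_n e_n \sum_{m > n}(m+1-d_m)\right).
\]
Using $\binom{a}{b}_p \ll p^{b(a-b)}$, the product of Gaussian binomials is absorbed into the exponential at the cost of a further $p^{\mathcal{O}(\sum e_n)} = p^{\mathcal{O}(N^{1/2})}$ factor, giving total $\log_p$-exponent $\sum_n e_n \sum_{m \ge n}(m+1-d_m) + \mathcal{O}(N)$. Now Lemma~\ref{Lem:generator growth} gives $\dim W_n > d_{n-1}$ whenever $d_{n-1} > 0$, whence $e_n \le d_n - d_{n-1} - 1$ past the smallest index $n_0$ with $d_{n_0} > 0$; at $n_0$ itself one has $W_{n_0} = 0$ and hence $e_{n_0} = d_{n_0}$, overshooting $d_{n_0} - d_{n_0-1} - 1$ by exactly one. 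I then verify three auxiliary bounds: (i) $n_0 = \mathcal{O}(N^{1/2})$, from $\binom{n_0+1}{2} = \sum_{n < n_0}(n+1-d_n) \le N$; (ii) $\sum_n e_n = \mathcal{O}(N^{1/2})$, which is Lemma~\ref{Lem:bound for generators} for $d = 2$; and (iii) $\max_n(n+1-d_n) \le n_0$, because the constraint $d_{n+1} \ge d_n + 1$ once $d_n > 0$ makes $(n+1-d_n)$ non-increasing past $n_0$. Items (ii)--(iii) show the diagonal correction $\sum_n e_n(n+1-d_n) \le n_0 \sum_n e_n = \mathcal{O}(N)$, while the boundary overshoot at $n_0$ contributes at most $1 \cdot \sum_{m > n_0}(m+1-d_m) \le N$. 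Summing over the $\le 2^N = p^{\mathcal{O}(N)}$ admissible sequences $(d_n)$ then yields the claimed upper bound.

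For the matching lower bound, I fix a sequence $(d_n)$ achieving the maximum and build ideals realising it: at each $n$ with $d_n > 0$, choose a subspace of $R_n/R_{n+1}$ of dimension $e_n = d_n - d_{n-1} - 1$ complementary to $W_n$ (for example, the monomial ``staircase'' $V_n = \operatorname{span}(x_2^n, x_1 x_2^{n-1}, \ldots, x_1^{d_n-1} x_2^{n-d_n+1})$ realises $\dim W_n = d_{n-1}+1$), and for each resulting new generator independently assign free tail coefficients in the complements of the $V_m$ for $m > n$. A Gaussian-elimination normal form ensures distinct choices yield distinct ideals, producing at least $\exp_p(\sum_{n:d_n>0}(d_n - d_{n-1} - 1)\sum_{m > n}(m+1-d_m) - \mathcal{O}(N))$ ideals of index $p^N$. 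The principal obstacle is the $\mathcal{O}(N)$ bookkeeping: the boundary overshoot at $n_0$ and the shift from $\sum_{m \ge n}$ to $\sum_{m > n}$ both rely on the square-root bounds $n_0, \sum_n e_n = \mathcal{O}(N^{1/2})$ that are specific to $d = 2$; in higher dimensions these become $\mathcal{O}(N^{(d-1)/d})$ and the tight error term is lost, which is exactly why this lemma is restricted to the two-variable case.
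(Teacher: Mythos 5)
Your proposal follows the paper's own proof essentially step for step: the upper bound specializes Lemma~\ref{Lem:general number} to $d=2$, invokes Lemma~\ref{Lem:generator growth} to replace $e_n$ by $d_n-d_{n-1}-1$ (with the same $+1$ overshoot at $n_0$ absorbed into the $\mathcal{O}(N)$ term), and disposes of the Gaussian binomials and of the sum over parameter sequences using $n_0,\ \sum_n e_n=\mathcal{O}(\sqrt N)$, which is exactly the paper's bookkeeping (the paper bounds the sequences by a partition count and the binomials via $\sum e_n\le n_0+1$, but these are interchangeable within $p^{\mathcal{O}(N)}$). Your lower bound via the monomial staircase $\{x_1^ax_2^b: a\le d_{a+b}-1\}$ realizing the optimal $(d_n)$ with $e_n=d_n-d_{n-1}-1$, plus freely chosen higher-order tails for the generators, is likewise the paper's route (the paper proves realizability and leaves the tail count implicit in the choice-counting of Lemma~\ref{Lem:general number}, as its Section~3 construction does explicitly), so the two arguments coincide in substance.
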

\begin{proof}
To determine a sequence $(d_n)$ satisfying all properties we first pick an integer $n_0$, which is the smallest $n$ 
such that $d_n>0$. Then $\sum_{n<n_0}(n+1-d_n)=\sum_{n<n_0}(n+1)=\binom{n_0+1}{2}$. For $n\geq n_0$ we have 
that $n+1-d_n$ is non-increasing, and any non-increasing sequence with sum $N-\binom{n_0+1}{2}$ defines a 
hence, the sequence $d_n$ is uniquely determined by $n_0$ and a ordered partition of $N-\binom{n_0+1}{2}$. 
Therefore the number of choices for the sequence $(d_n)$ is
\[
\sum_{n_0\geq 0} p\left(N-\binom{n_0+1}{2}\right)<\sqrt{2N}p(N)=\mathcal{O}(e^{c\sqrt{N}}).
\]
We conclude that the sum over all possible sequences can be replaced by the maximum.

By Lemma~\ref{Lem:generator growth} we can bound $e_n$ by $d_n-d_{n-1}-1$ if $d_{n-1}\neq 0$. If $d_{n-1}=0$, then replacing $e_n$ by $d_n-1$ introduces an error of 1, but this error is multiplied by something less than the index, and is captured by the error term.

Finally the subspace numbers are
\[
\prod_{n=1}^\infty \binom{n+1-d_n+e_n}{e_n}_p \leq \prod_{n=1}^\infty \binom{n+1-d_{n-1}}{e_n}_p.
\]
The sequence $n-d_n$ is non-increasing, hence, the number of possibly choices for the subspaces is at most $\prod_{n=1}^\infty \binom{n_0+1}{e_n}_p$, where $n_0$ is the smallest integer with $d_{n_0}>0$.

For $n>n_0$ we have $e_n\leq d_n-d_{n-1}-1$, hence, if we denote by $n-1$ an index such that $d_{n_1}=n_1+1$, we have
\[
\sum e_n \leq e_{n_0} + \sum_{n>n_0} d_n-d_{n-1}-1 = d_{n_0} + \sum_{n=n_0+1}^{n_1} d_n-d_{n-1}-1 = d_{n_1}-(n_1-n_0) = n_0+1.
\]
We conclude that 
\[
\prod_{n=1}^\infty \binom{n+1-d_{n-1}}{e_n}_p \leq \prod_{n=1}^\infty \binom{n_0+1}{e_n}_p \leq \exp_p\left(\sum_{n=1}^\infty e_n(n_0+1)\right)\leq p^{n_0^2+n_0}.
\]
There are $\binom{n_0}{2}$ monomials with sum of exponent $<n_0$, hence, $n_0\leq \sqrt{2N}+1$, and we see that the number of subspaces is $\exp_p(\mathcal{O}(N))$. We find that the stated expression is an upper bound for the number of ideals.

To show that we actually have an equality we have to show that for every sequence $(d_n)$ with $d_n\leq n+1$ and $d_{n+1}\geq d_n+1$ for all $n$ with $d_n>0$ there exists an ideal which realizes these dimensions and satisfies $e_n=d_n-d_{n-1}-1$ for all $n$. Such an ideal is generated by the monomials $X=\{x^a y^b : a\leq d_{a+b}-1\}$. Here $(I\cap R_n)R_{n+1}/R_{n+1} = \langle x^ay^{n-a} : a\leq d_n-1\rangle$, which is of dimension $n$. Moreover,
\begin{eqnarray*}
\varphi_1(X)\cup\varphi_2(X) & = & \{x^{a+1} y^{n-a} : a\leq d_n-1\}\cup\{x^a y^{n+1-a} : a\leq d_n-1\}\\
 & = & \{x^a y^{n+1-a} : a\leq d_n\},
\end{eqnarray*}
which generates a subspace of dimension $d_n+1$. 
\end{proof}

Next we determine the maximum of this expression over all sequences. Put $r_n=n+1-d_n$. Then the index of the ideal becomes $p^{\sum r_n}$, the restrictions become $r_n=n+1$ for $n<n_0$, and $r_{n+1}\leq r_n$ for $n\geq n_0$. The function to be maximized becomes
\[
\sum_{n:d_n>0} (d_n-d_{n-1}-1)\sum_{m>n}(m+1-d_m) = \sum_{n\geq n_0} (r_{n-1}-r_n)\sum_{m>n} r_m.
\]
Put $R_n=\sum_{m>n} r_n$. We now apply partial summation to the expression on the right and obtain
\[
\sum_{n\geq n_0} (n_0-r_n)r_{n+1} = n_0\sum_{n>n_0} r_n - \sum_{n\geq n_0} r_nr_{n+1}.
\]
For fixed $\sum r_n$ this expression becomes maximal when $r_n=1$ for all $n\geq n_0$. In this case we obtain
\[
(n_0-1)\sum_{n>n_0} r_n =(n_0-1)\left(N-\binom{n_0}{2}\right) = N(n_0-1) -\frac{n_0^2(n_0-1)}{2}+\frac{n_0(n_0-1)}{2}.
\]
The derivative of this expression with respect to $n_0$ is
\[
N-\frac{3}{2}n_0^2 +2n_0-\frac{1}{2}.
\]
Hence, the maximum is attained for $n_0=\sqrt{\frac{2}{3}N}+\mathcal{O}(1)$, and the maximal value is
\[
\left(\sqrt{\frac{2}{3}N}+\mathcal{O}(1)\right)\left(N-\frac{1}{3}N+\mathcal{O}(\sqrt{N}\right)=\left(\frac{2}{3}\right)^{3/2}N^{3/2}+\mathcal{O}(N)
\]

\end{document}